\newcommand{\Arrow}[1]{%
	\parbox{#1}{\tikz{\draw[->](0,0)--(#1,0);}}
}
\newcommand{\rsss}{\rotatebox[]{90}{$\boxminus$}\kern-0.7em{\mathrel{\raisebox{.2ex}{\Arrow{.35cm}}}}\!\!}
\newcommand{\csss}{\text{$\boxminus\kern-0.655em{\mathrel{\raisebox{-.2ex}{\rotatebox[]{-90}{\Arrow{.34cm}}}}}$\ }}
\newcommand{\ba}{\begin{array}}
\newcommand{\ea}{\end{array}}
\newtheorem{example}[theorem]{\bf Example}
\newtheorem{note}{Note}
\newcommand{\be}{\begin{equation}}
\newcommand{\ee}{\end{equation}}
\newcommand{\beano}{\begin{eqnarray*}}
\newcommand{\eeano}{\end{eqnarray*}}
\def \A{{\mathcal A}}
\def \N{{\mathbb N}}
\def \d{{\bf d}}
\def \r{{\bf r}}
\def \diag{\mathrm{diag}}
\def \ker{\mathrm{Ker}}
\def \Im{\mathsf{Im}}
\title{Arithmetical Structures on Wheel Graphs}
\author{Bibhas Adhikari \thanks{Department of Mathematics, IIT Kharagpur, Kharagpur-721302, India, ({\tt bibhas@maths.iitkgp.ac.in}). The author currently works at Fujitsu Research of America, Inc., USA} \and Namita Behera \thanks{Department of Mathematics, Sikkim University, Sikkim-737102, India, ({\tt nbehera@cus.ac.in}, niku.namita@gmail.com)} \and  Dilli Ram Chhetri \thanks{Department of Mathematics, Sikkim University, Sikkim-737102, India, ({\tt drchhetri.22pdmt01@sikkimuniversity.ac.in}).} 
\and Raj Bhawan Yadav \thanks{Department of Mathematics, Sikkim University, Sikkim}-737102, India, ({\tt rbyadav01@cus.ac.in}) }
\begin{document}
\maketitle
\begin{abstract} An arithmetical structure on a finite and connected graph $G$ is a pair $(\textbf{d}, \textbf{r})$ of positive integer vectors such that $\textbf{r}$ is primitive (the gcd of its entries is 1) and $(\diag(\textbf{d}) - A)\textbf{r} = 0,$ where $A$ is the adjacency matrix of $G.$ In this article, we investigate arithmetical structures on the wheel graphs. 
\end{abstract}

\begin{keywords}
Wheel graphs, Laplacian, Arithmetical structures, M-matrices    
\end{keywords}

\begin{AMS}
 11D72, 15B48, 11D45, 11B83, 11D68, 11C20   
\end{AMS}







\section{Introduction}
An arithmetical structure on a graph is a combinatorial construct that associates integer-valued functions to the vertices of the graph, satisfying certain linear algebraic conditions related to the graph's adjacency matrix. Introduced by Lorenzini \cite{lorenzini1991finite}, the notion of arithmetical structures arise in the study of degenerating curves in algebraic geometry. Indeed, an arithmetical graph is a triplet $(G, {\bf d}, {\bf r})$ given by a graph $G=(V,E)$ and a pair of vectors $\d, \r\in \N^{V}$ such that gcd$\left(r_v: v\in V\right)=1$ and \begin{equation}\label{eqn:vcond}
    d_vr_v=\sum_{u\in \, \mathcal{N}_v} r_u, \, v\in V,
\end{equation} 
where $\mathcal{N}_v$ denotes the neighbourhood of a vertex $v$ and $\N$ is the set of positive integers. Given an arithmetical graph $(G, \d, \r),$ we say that the pair $(\d, \r)$ is an arithmetical
structure of $G$. A well-known result from \cite{lorenzini1991finite} proves that there exist only finitely many arithmetical structures on any connected simple graph. Consequently, given a graph $G,$ the set of arithmetical structures is defined as \begin{equation}\label{eqn:asofg}
    \mathcal{A}(G)=\left\{ (\d,\r)\in \N^V\times \N^V : (\d , \r ) \,\, \mbox{is an arithmetical structure of} \,\, G \right\},
\end{equation} and it is a paramount interest in literature to characterize $\mathcal{A}(G)$ for a given $G.$ Since each $\d$ (resp $\r$) determines $\r$ (resp. $\d$) uniquely, any one of $\d$, $\r$ or $(\d,\r)$ may be considered as arithmetical structures of the graph. Following the literature, we use the  terms $\d$-structure and $\r$-structure instead of arithmetical structure when we consider only $\d$ and $\r,$ respectively \cite{BHDNJC18}. We denote the set of $\r$-structures of $G$ as $\A_{\r}(G).$


Arithmetical structures have deep connections with number theory, algebraic geometry, and potential theory on graphs, as they generalize concepts like divisor theory on curves and abelian sandpile models \cite{biggs1997algebraic,corrales2018arithmetical}. The study of arithmetical structures on graphs also intersects with topics like chip-firing games and the study of critical groups (Jacobians) of graphs, which have applications in various fields, including network theory and statistical physics \cite{baker2007riemann,Corry2018DivisorsAS}.

Before proceeding, let us recall some graph theory terminologies following \cite{diestel2024graph}. Let $G=(V,E)$ denote a simple connected graph with $V$ is the vertex set and $E\subset V\times V$ represents the edge set. In this article, all graphs are undirected and finite. The complete graph on $n$ vertices is denoted by $K_n,$ and the cycle graph on $n$ vertices is denoted by $C_n.$ The adjacency matrix $A=[a_{ij}]$ associated with a graph is a symmetric matrix with $a_{ij}=1$ if $(i,j)\in E$, and $a_{ij}=0$ otherwise, for $i,j\in V$. The degree of a vertex is the number of edges incident to it, and the degree matrix for a graph on 
$n$ vertices is defined as $D=\diag(\mbox{deg}_1,\hdots,\mbox{deg}_n),$ where $\mbox{deg}_i$ is the degree of $i\in V.$ Then the matrix $L=D-A$ represents the discrete Laplacian operator associated with the graph.    

In \cite{HZLJ20, ZHJL2024, corrales2018arithmetical, BHDNJC18}, the investigation of arithmetical structures on complete graphs, paths, and cycles is conducted from a multi-directed graph perspective, and Lorenzini's result of finite number of arithmetical structures on a graph is further established. The study also reveals that the number of arithmetical structures on a path corresponds to the Catalan number. Similarly, \cite{BHDNJC18} demonstrates that for paths and cycles, the enumeration of arithmetical structures is governed by the combinatorial properties of Catalan numbers. Given the complexity of identifying arithmetical structures on general graphs, the literature emphasizes the characterization of these structures on specific graphs. The article \cite{BHDNJC18} examines arithmetical structures on graphs with a cut vertex. On complete and star graphs, arithmetical structures are shown to correspond one-to-one with a variant of Egyptian fractions. For instance, arithmetical 
$\d$-structures on the star graph 
$K_{n,1}$ can be represented as positive integer solutions to the equation 
$d_0=\sum_{i=1}^n 1/d_i$ . This solution is often referred to as an Egyptian fraction representation of 
$d_0$ \cite{Sloane18}. A related problem, with the additional constraints 
$d_0=1$ and 
$d_1\leq \hdots \leq d_n,$, was studied by Sándor \cite{sandor03}, who provided upper and lower bounds for the number of solutions, with the upper bound later refined by Browning and Elsholtz \cite{TC11}. Despite these efforts, the gap between the lower and upper bounds remains large, leaving the asymptotic growth uncertain. Recent works \cite{BHDNJC18, KAAL20} have expanded the counting of arithmetical structures to various families of graphs, including path graphs, cycle graphs, bidents, and certain path graphs with doubled edges.

It should be noted that the equation (\ref{eqn:vcond}) can be written as a matrix equation \begin{equation}\label{eqn:mcond}
    (\diag(\d)-A)\r=0,
\end{equation} which is satisfied by considering $\d$ as the degree vector associated with the graph and $\r$ is the all-one vector, which follows from the fact that all-one vector is an eigenvector of the Laplacian matrix $L$ corresponding to the eigenvalue zero. In general, given $\d,$ the $\r$-structure on a graph is the integer solution set of the equation (\ref{eqn:mcond}). Considering the operator $L(G,\d)=\diag(\d)-A$ as a generalization of the Laplacian matrix $L,$ and given $(\d,\r)\in \mathcal{A}(G),$ \begin{equation}\label{eqn:defcg}
    K(G,\d,\r) = \ker(\r^T) / \Im(L(G,\d)),
\end{equation} defines the critical group of $(G,\d,\r),$ which generalizes the concept of critical group of $G$ \cite{biggs1997algebraic}. It should be noted that critical group is also known as sandpile group \cite{LLJP2010, Dhar90, PCK88}, and the elements of the critical
group represent long-term behaviours of the well-studied abelian sandpile model on $G$ \cite{BTW88, LP10, criticalgroupon(starandcompletegraph)}.

  \begin{figure}[t]  
\centering
\begin{tikzpicture}[scale=2, every node/.style={circle, draw, minimum size=0.5cm}]
    \node [fill=blue!20] (v1) at (90:1) {$v_1$};
    \node [fill=blue!20] (v2) at (150:1) {$v_6$};
    \node [fill=blue!20] (v3) at (210:1) {$v_5$};
    \node [fill=blue!20] (v4) at (270:1) {$v_4$};
    \node [fill=blue!20] (v5) at (330:1) {$v_3$};
    \node [fill=blue!20] (v6) at (30:1) {$v_2$};

    \node[fill=blue!20] (center) at (0,0) {$v_0$};

    \draw (v1) -- (v2) -- (v3) -- (v4) -- (v5) -- (v6) -- (v1);

    \foreach \i in {1,2,3,4,5,6} {
        \draw (center) -- (v\i);
    }
\end{tikzpicture}
\caption{Wheel graph $W_6$}
\label{Fig:w6}
\end{figure}
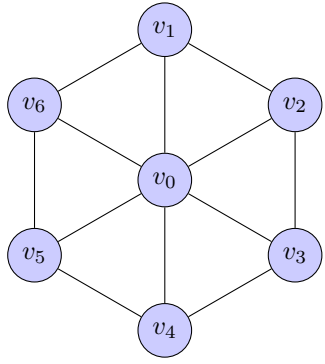

In this paper, we consider characterizing the arithmetical structure on the wheel graphs. The wheel graph  with $n$ outer vertices is denoted as $W_n$. With a total $n+1$ vertices and $2n$ edges, $W_n$ is formed by connecting a single central vertex (labeled $v_0$) to all vertices of an outer cycle $C_{n}$ with vertices labeled as $v_1,\hdots, v_{n}$. For instance, $W_6$ is given by Figure \ref{Fig:w6}. Obviously, $W_1$ is an edge and $W_2$ is isomorphic to $C_3.$ Thus we consider $W_n$ to be nontrivial when $n\geq 3.$ The adjacency matrix $A(W_n)=[a_{v_i,v_j}]$ of order $n+1$ is a symmetric matrix is given by \begin{equation}\label{eqn:adjmwn}
    a_{v_i,v_j}=\begin{cases}
        1, \,\, \mbox{if} \,\, i=0, j=1,\hdots,n, \\
        1, \,\, \mbox{if} \,\, j=i+1, i= 1,\hdots,n-1, \\
        0, \,\, \mbox{otherwise}.
    \end{cases}
\end{equation} Wheel graphs can be described in multiple ways employing the notions of the star graph $S_n$ on $n$ vertices, the cycle graph $C_n$, and a singleton vertex graphs, denoted as $K_1$ (the complete graph on one vertex). Indeed, $W_n=C_n+K_1,$ where $+$ denotes the join of two graphs \cite{diestel2024graph, godsil2001algebraic}. Besides, $W_n$ can be formed from $S_n$ with an additional set of edges is added by connecting all the outer nodes in a cycle.  
It will be interesting to investigate and explore whether any one of $\A(C_n),$ $\A(S_n),$ and $\A(W_n)$ can characterize the others. As mentioned above, $\A(C_n)$ is controlled by the combinatorics of Catalan numbers $(2n)!/(n+1)!\, n!$ \cite{BHDNJC18}.



We denote the all-one vector of dimension $n$ as ${\bf 1}_n.$ The contribution of this paper are as follows. \begin{enumerate}
  \item If $(\d,\r)\in \mathcal{A}(W_n),$ $n\geq 4$ with $\d=(d_0,d_1,\hdots,d_{n}),$ $\r=(r_0,r_1,\hdots,r_{n})$ and if $\r \neq {\bf 1}_{n+1}^t$ then one of the following holds:
\begin{enumerate}
    \item $ d_0> n$ and $d_i<3,$ for some $i>0$.
    \item $d_0< n$ and $d_i>3$, for some $i>0$.
    \item $d_0=n$ and $d_i<3$, $d_j>3,$ for some $0< i\neq j\le n.$
\end{enumerate}
If $d_i=1$ for some $i$,  then $\textbf{d} _u >1$  for all $u \in N_{W_{n}}(v_i)$, where $N_{W_{n}}(v_i)$ is the neighbourhood of $i$th vertex $v_i$ in $W_{n}$.

    \item The paper demonstrates how the arithmetical structure on $W_{n+1}$ is derived from the arithmetical structure on $W_{n}$. (Section~4)

    \item  It describes the arithmetical structures on $W_n$ derived from the cycle graph $C_n$. (Section 3.1)

    \item If $(\d,\r)\in \A(C_n)$ and $p=l.c.m. \{r_i: \ 1 \leq i \leq n\}$.  If $\tilde{r}_0\in \mathbb{N}$ such that $p$ divides $\tilde{r}_0$, $\tilde{r}_0$ divides $\sum_{i=1}^n r_i$ and ${\bf {\tilde{r}}} = ( \tilde{r}_0, r_1, \cdots, r_n)$,  then $\bf{\tilde{r}} \in \A_{\r}(W_n)$. 

    \item  Inspired by the blowup operation introduced by Lorenzini in [\cite{LD89}, page 485] and the clique–star transformation described in \cite{corrales2018arithmetical}, we define several analogous operations in different directions. Like the blowup operation, the operations introduced in this article, when applied to a graph, do not always result in a graph. (Section~4)


\end{enumerate}
 




\section{Preliminaries}

In this section, we briefly review some elementary matrix theoretic notions that will be used in sequel. For more information on these concepts, refer to \cite{Giorgi, corrales2018arithmetical, BHDNJC18}.

\begin{definition}
A square matrix $A$ is called reducible if there exists a permutation matrix $P$ such that $$PAP^{t} = \begin{pmatrix}
    A_1 & * \\
    0 & A_2 \\
\end{pmatrix}$$ for some non-trivial square matrices $A_1$ and $A_2.$  A square matrix $A$ which is not  reducible  is called irreducible.  
\end{definition}
We recall following Theorem from \cite{Giorgi}
\begin{theorem}\label{reducible}
The square matrix $A$ of order $n$ is reducible if and only if there exists a partition
$N_1$, $N_2$, $N_1\ne \emptyset $, $N_2\ne \emptyset$, of the set $N= \{ 1, \ldots, n\}$ such that 
$$ i\in N_1, j\in N_2\implies a_{ij}=0.$$
\end{theorem}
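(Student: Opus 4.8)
The plan is to prove both directions of the equivalence directly, the whole content being the elementary observation that conjugation by a permutation matrix is just a relabelling of indices. Fix the convention that if $P$ is the permutation matrix associated with a bijection $\sigma\colon N\to N$ (so that $P^{t}=P^{-1}$), then $(PAP^{t})_{kl}=a_{\sigma(k),\sigma(l)}$ for all $k,l\in N$. With this in hand the two implications become bookkeeping.

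For the ``if'' direction, suppose we are given a partition $N=N_1\cup N_2$ with $N_1,N_2\ne\emptyset$, $N_1\cap N_2=\emptyset$, and $a_{ij}=0$ whenever $i\in N_1$, $j\in N_2$. Put $m=|N_2|$ and choose any bijection $\sigma$ with $\sigma(\{1,\dots,m\})=N_2$ and $\sigma(\{m+1,\dots,n\})=N_1$; let $P$ be the corresponding permutation matrix. For $k\in\{m+1,\dots,n\}$ and $l\in\{1,\dots,m\}$ we have $\sigma(k)\in N_1$ and $\sigma(l)\in N_2$, hence $(PAP^{t})_{kl}=a_{\sigma(k),\sigma(l)}=0$; this is precisely the lower-left block of the block decomposition of sizes $m$ and $n-m$, so $PAP^{t}$ has the form $\bigl(\begin{smallmatrix}A_1&*\\0&A_2\end{smallmatrix}\bigr)$ with $A_1$ of order $m\ge 1$ and $A_2$ of order $n-m\ge 1$, both nontrivial. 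Thus $A$ is reducible.

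For the ``only if'' direction, suppose $A$ is reducible, witnessed by a permutation matrix $P$ (associated with a bijection $\sigma$) and an integer $m$ with $1\le m\le n-1$ such that $(PAP^{t})_{kl}=0$ for all $k>m$ and $l\le m$. Set $N_2=\sigma(\{1,\dots,m\})$ and $N_1=\sigma(\{m+1,\dots,n\})$; these are nonempty, disjoint, and cover $N$. If $i\in N_1$ and $j\in N_2$, write $i=\sigma(k)$ with $k>m$ and $j=\sigma(l)$ with $l\le m$; then $a_{ij}=a_{\sigma(k),\sigma(l)}=(PAP^{t})_{kl}=0$, which is the asserted property of the partition. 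This completes the proof.

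I expect no real obstacle here: the one point requiring care is pinning down the convention relating $P$ to the underlying bijection $\sigma$ so that the identity $(PAP^{t})_{kl}=a_{\sigma(k),\sigma(l)}$ holds with $\sigma$ in the correct direction, and correctly matching ``which block is zero'' (lower-left, forcing $N_2$ to be indexed first and $N_1$ second). Once that is fixed, both implications are immediate.
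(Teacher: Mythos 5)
Your proof is correct: the convention $(PAP^{t})_{kl}=a_{\sigma(k),\sigma(l)}$ is internally consistent, and both directions are handled by the standard relabelling argument, with the block sizes and the zero lower-left block correctly matched to the definition of reducibility used here. The paper itself only recalls this classical theorem from its reference without supplying a proof, so there is nothing to compare against; your argument is the canonical one and fills that gap adequately.
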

\begin{definition} [Neighbourhood of a vertex $v$ in a graph $G$] \cite{diestel2024graph}
 The neighbourhood of a vertex $v$ in a graph $G$ is the subgraph of $G$ induced by all vertices adjacent to $v$, i.e., the graph composed of the vertices adjacent to $v$ and all edges connecting vertices adjacent to $v$.  The neighbourhood is often denoted by $N_G(v)$.
\end{definition}

Here we recall the classical concept of an $M$-matrix and study a class of $M$-matrices whose proper principal minors are positive and its determinant is non-negative. Let us begin with some definitions:

\begin{definition}[Real non-negative matrix] \cite{corrales2018arithmetical}
A real square matrix is called non-negative if all its entries are non-negative real numbers.    
\end{definition}

\begin{definition}[$Z$-matrix] \cite{corrales2018arithmetical}
 A real matrix $ A = (a_{i,j}) \in \mathbb{R}^{n \times n}$ is called a Z-matrix if $a_{i,j} \leq 0 ,\ for \ all \ i \neq j$. 
\end{definition}

\begin{definition}[$M$-matrix] \cite{corrales2018arithmetical}
 A $Z$-matrix $A$ is an $M$-matrix if there exists a non-negative matrix $N$ and a non-negative number $\alpha$ such that such that $A = \alpha I- N \ and \ \alpha \geq \rho(N)$, where $\rho(N) = max\{|\lambda|  : \lambda \in \sigma(N)\}$.   
\end{definition}
The study of $M$-matrices can be divided into two major parts: non-singular M-matrices (see[\cite{Berman1994andPlemmons},Section 6.2])

\begin{definition}\label{almost-non}
A real matrix $A =(a_{i,j})$ is called an almost non-singular $M$-matrix if $A$ is a $Z$-matrix, all its proper principal minors are positive and its determinant is non-negative.    
\end{definition}
Next, we recall following Theorems:
\begin{theorem} \cite{corrales2018arithmetical}\label{Theorem 2.6 from Corrales and Valencia}
 If $M$ is a real $Z$-matrix, then the following conditions are equivalent: 
 \begin{itemize}
     \item [(1)] $M$ is an almost non-singular $M$-matrix.
     \item [(2)] $M +D$ is a non-singular $M$matrix for any diagonal matrix $D>_{\neq} 0$. 
     \item [(3)] $\det(M) \geq 0$ and $\det(M +D) {\gneq} \det(M +D'){\gneq} 0$  for any diagonal matrices $D >_{\neq} D'{\gneq} 0.$  
 \end{itemize} 
\end{theorem}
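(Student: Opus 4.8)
The plan is to prove the cycle of implications $(1)\Rightarrow(2)\Rightarrow(3)\Rightarrow(1)$. Throughout, write $M_{S}$ for the principal submatrix of $M$ indexed by $S\subseteq N=\{1,\dots,n\}$, with the convention $\det M_{\emptyset}=1$. Three ingredients are used: (i) the multilinear expansion of a diagonally perturbed determinant, $\det(M+\diag(\mathbf d))=\sum_{S\subseteq N}\big(\prod_{i\in S}d_{i}\big)\det M_{N\setminus S}$; (ii) the classical characterization (see \cite{Berman1994andPlemmons}) that a $Z$-matrix is a non-singular $M$-matrix if and only if all of its principal minors are positive, together with the fact that adding a non-negative diagonal matrix to a non-singular $M$-matrix again gives a non-singular $M$-matrix; and (iii) the elementary observation that, by Definition~\ref{almost-non}, every principal submatrix of an almost non-singular $M$-matrix is again an almost non-singular $M$-matrix (its proper principal minors and its determinant are among the proper principal minors of the ambient matrix).

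For $(1)\Rightarrow(2)$, fix a diagonal $D\gneq 0$; since $M+D$ is a $Z$-matrix it suffices, by (ii), to check that each principal minor is positive. Expanding via (i), $\det\big((M+D)_{T}\big)=\det\big(M_{T}+D_{T}\big)=\det M_{T}+\sum_{\emptyset\ne S\subseteq T}\big(\prod_{i\in S}d_{i}\big)\det M_{T\setminus S}$: by (iii) the term $\det M_{T}$ is $\ge 0$, each $\det M_{T\setminus S}$ with $\emptyset\ne S\subsetneq T$ is a positive proper principal minor, and $S=T$ contributes $\prod_{i\in T}d_{i}\ge 0$; hence the sum is non-negative, and it is strictly positive because if $D_{T}\ne 0$ one may take $S=\{k\}$ with $d_{k}>0$ (or $S=T$ when $|T|=1$), while if $D_{T}=0$ then $T\subsetneq N$ and $\det M_{T}>0$. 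For $(2)\Rightarrow(3)$, letting $\epsilon\downarrow 0$ in $\det(M+\epsilon I)>0$ yields $\det M\ge 0$; and for $D>_{\neq}D'\gneq 0$ one expands $\det(M+D)$ around the non-singular $M$-matrix $M+D'$ to obtain $\det(M+D)-\det(M+D')=\sum_{\emptyset\ne S\subseteq N}\big(\prod_{i\in S}(d_{i}-d_{i}')\big)\det\big((M+D')_{N\setminus S}\big)$, a sum of non-negative terms (all principal minors of the non-singular $M$-matrix $M+D'$ being positive) which is strictly positive since $D-D'\ne 0$; combined with $\det(M+D')>0$ this is exactly $(3)$.

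For $(3)\Rightarrow(1)$ — the substantive step — the hypothesis gives $\det M\ge 0$, so it remains to show that every proper nonempty principal minor is positive, which I would do by induction on $n$. The $(n-1)$-sized minors come for free: adding $t>0$ to a single diagonal entry $k$ changes $\det M$ to $\det M+t\det M_{N\setminus\{k\}}$, an affine function of $t$ which by the monotonicity clause of $(3)$ is strictly increasing, forcing $\det M_{N\setminus\{k\}}>0$. For a proper nonempty $T$ with $|T|\le n-2$, choose $k\notin T$, put $N'=N\setminus\{k\}$ and $B=M_{N'}$ so that $T\subsetneq N'$; it suffices to prove that $B$ satisfies $(3)$, since then the inductive hypothesis makes $B$ an almost non-singular $M$-matrix and $\det M_{T}=\det B_{T}>0$. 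Here $\det B=\det M_{N'}>0$ by the previous case, and positivity of $\det(B+E)$ for a diagonal $E\gneq 0$ on $N'$ follows from the identity $\det(M+\widehat E+s\Delta_{k})=\det(M+\widehat E)+s\det(B+E)$ — where $\widehat E$ extends $E$ by zero and $\Delta_{k}$ is the diagonal matrix with a single nonzero entry, equal to $1$, in position $(k,k)$ — together with the strict monotonicity of $(3)$ in the variable $s$ (the function $s\mapsto\det(M+\widehat E+s\Delta_{k})$ is affine and strictly increasing, so its slope $\det(B+E)$ is positive).

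The main obstacle is the strict-monotonicity part of $(3)$ for the submatrix $B$: the determinant expansions and the affine "peeling" identities above deliver only the non-strict inequality $\det(B+D)\ge\det(B+D')$, because strict monotonicity for $M$ does not pass directly to its principal submatrices. To upgrade this I would invoke the fine structure of $M$-matrices: an almost non-singular $M$-matrix with vanishing determinant is necessarily an \emph{irreducible} singular $M$-matrix (a reducible one, in block-triangular form, would have one of its diagonal blocks — a proper principal submatrix — singular, contradicting positivity of the proper minors), and for such a matrix the Perron--Frobenius theorem — a strictly positive null vector, and irreducibility preserved under diagonal perturbation — shows that every non-negative nonzero diagonal perturbation yields a non-singular $M$-matrix, which supplies the missing strict inequalities and closes the induction. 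Reconciling the "$\ge$" produced by multilinearity with the "$>$" required by $(3)$ is precisely where I expect essentially all of the work to lie.
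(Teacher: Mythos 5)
First, a point of comparison: the paper does not prove this statement at all --- it is recalled verbatim from \cite{corrales2018arithmetical} and used as a black box --- so there is no in-paper proof to measure you against, and I am judging your argument on its own terms. Your implications $(1)\Rightarrow(2)$ and $(2)\Rightarrow(3)$ are correct and are the standard multilinearity arguments. The problem is in $(3)\Rightarrow(1)$, and it is exactly the one you flag yourself: to run the induction you must verify condition $(3)$ for the submatrix $B=M_{N\setminus\{k\}}$, and your peeling identities only deliver the non-strict inequality $\det(B+D)\ge\det(B+D')$. The repair you sketch does not close this. The structure theory of irreducible \emph{singular} $M$-matrices is irrelevant to $B$, since you have already shown $\det B>0$; and, more seriously, at that point of the argument you do not yet know that $B$ (or $B+D'$) is an $M$-matrix of any kind, so invoking Perron--Frobenius theory ``for such a matrix'' presupposes essentially the conclusion the induction is supposed to produce. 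As written, the induction does not close.

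The gap is fillable, but by a different mechanism than the one you name. From what you have already proved --- $\det(B+E)>0$ for \emph{every} diagonal $E\ge 0$ on $N'$, including $E=0$ --- take $F$ to be the $0$--$1$ diagonal matrix supported on $N'\setminus T$ and expand $\det(B+tF)=\sum_{S\subseteq N'\setminus T}t^{|S|}\det B_{N'\setminus S}$; positivity of this polynomial for all $t>0$ forces the top-degree coefficient $\det B_{T}$ to be $\ge 0$. Hence $B$ is a $Z$-matrix all of whose principal minors are nonnegative, which by the classical characterization (\cite{Berman1994andPlemmons}, Chapter~6) makes $B$ an $M$-matrix; an $M$-matrix with $\det B>0$ is a non-singular $M$-matrix, and non-singular $M$-matrices have all principal minors strictly positive. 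That yields $\det M_{T}=\det B_{T}>0$ directly and bypasses any need to verify the strict monotonicity clause of $(3)$ for $B$. If you prefer to keep your inductive formulation, the honest statement is that you must first import the equivalence ``all principal minors $\ge 0$ $\Leftrightarrow$ $M$-matrix'' for $Z$-matrices; the multilinear expansions alone will never upgrade the $\ge$ to $>$ on the deeper minors, and that importation is where the real content of $(3)\Rightarrow(1)$ lies.
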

\begin{theorem} \cite{corrales2018arithmetical}\label{Theorem 3.2 from Corrales and Valencia}
Let $M$ be a $Z$-matrix. If there exists ${\bf r}$ with all its entries positive such that $M{\bf r}^{t} = 0^t,$ then $M$ is an $M$-matrix. Moreover, $M$ is an almost non-singular $M$-matrix with $\det(M) = 0$ if and only if $M$ is irreducible and there exists ${\bf r}$ with all its entries positive such that $M{\bf r}^t = 0^t. $   
\end{theorem}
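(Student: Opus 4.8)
The plan is to deduce everything from standard Perron--Frobenius theory together with the definition of $M$-matrix. For the first assertion I would write $M = \alpha I - N$ where $\alpha$ is chosen larger than every diagonal entry of $M$, so that $N = \alpha I - M$ is entrywise non-negative: its off-diagonal entries are $-m_{ij}\ge 0$ because $M$ is a $Z$-matrix, and its diagonal entries are $\alpha - m_{ii}\ge 0$ by the choice of $\alpha$. The hypothesis $M{\bf r}^t = 0$ then reads $N{\bf r}^t = \alpha{\bf r}^t$, i.e. ${\bf r}^t$ is a positive eigenvector of the non-negative matrix $N$. Pairing this with a non-negative left Perron eigenvector ${\bf y}$ of $N$ (which exists by Perron--Frobenius applied to $N^t$), so that ${\bf y}N = \rho(N){\bf y}$, and computing ${\bf y}N{\bf r}^t$ two ways while using ${\bf y}{\bf r}^t > 0$ (non-negative times positive, non-zero), forces $\alpha = \rho(N)$. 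Hence $M = \alpha I - N$ with $\alpha \ge \rho(N)$, so $M$ is an $M$-matrix by definition.

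For the ``moreover'' part I would prove the two implications separately. Assume first that $M$ is irreducible and $M{\bf r}^t = 0$ for some positive ${\bf r}$. Then $M$ is singular, so $\det(M) = 0$, and $M$ is an $M$-matrix by the first part; writing $M = \alpha I - N$ as above, the preceding argument gives $\alpha = \rho(N)$, and $N$ is irreducible since irreducibility is governed by the off-diagonal zero pattern, which $M$ and $N$ share. It then remains to check that every proper principal minor of $M$ is positive: a proper principal submatrix of $M$ has the form $M' = \alpha I - N'$ with $N'$ the corresponding proper principal submatrix of the irreducible non-negative matrix $N$, and since a proper principal submatrix of an irreducible non-negative matrix has strictly smaller spectral radius, $\rho(N') < \rho(N) = \alpha$, so $M'$ is a non-singular $M$-matrix. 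Factoring $\det(M') = \prod_i(\alpha - \mu_i)$ over the eigenvalues $\mu_i$ of $N'$ and grouping complex-conjugate pairs, each real factor is positive (as $\mu_i \le \rho(N') < \alpha$) and each conjugate pair contributes $|\alpha - \mu_i|^2 > 0$; hence $\det(M') > 0$, and $M$ is an almost non-singular $M$-matrix with $\det(M)=0$.

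Conversely, assume $M$ is an almost non-singular $M$-matrix with $\det(M)=0$. If $M$ were reducible, a permutation would bring it to block-triangular form $PMP^t = \left(\begin{smallmatrix} M_1 & * \\ 0 & M_2 \end{smallmatrix}\right)$ with non-trivial blocks $M_1, M_2$; since the principal minors of $PMP^t$ are exactly those of $M$, both $\det(M_1)$ and $\det(M_2)$ are proper principal minors of $M$, hence positive, forcing $\det(M) = \det(M_1)\det(M_2) > 0$, a contradiction. So $M$ is irreducible. Writing $M = \alpha I - N$ with $\alpha \ge \rho(N)$, the condition $\det(M) = 0$ makes $\alpha$ an eigenvalue of $N$, which together with $\alpha \ge \rho(N)$ gives $\alpha = \rho(N)$; and since $N$ is non-negative and irreducible, Perron--Frobenius supplies a positive eigenvector ${\bf r}^t$ with $N{\bf r}^t = \rho(N){\bf r}^t$, i.e. $M{\bf r}^t = 0$.

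The only genuinely non-routine ingredients are the three Perron--Frobenius facts invoked above: that a non-negative matrix with a positive eigenvector has that eigenvalue equal to its spectral radius; that an irreducible non-negative matrix has a positive Perron eigenvector; and that deleting a matching row and column from an irreducible non-negative matrix strictly decreases the spectral radius. Given these, the ``hard part'' is essentially bookkeeping — making sure the matrix notion of (ir)reducibility matches the off-diagonal zero-pattern notion and is preserved under $M \leftrightarrow N = \alpha I - M$ and passage to principal submatrices, and keeping the determinant-sign computation honest in the presence of complex eigenvalues.
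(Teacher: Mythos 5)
This theorem is quoted in the paper's preliminaries from \cite{corrales2018arithmetical} and is not proved there, so there is no in-paper argument to compare against; judged on its own, your Perron--Frobenius proof is correct and is essentially the standard one for this result. All the main steps check out: pairing the positive right null vector with a non-negative left Perron eigenvector of $N=\alpha I-M$ to force $\alpha=\rho(N)$; transferring irreducibility between $M$ and $N$ via the shared off-diagonal zero pattern (consistent with Theorem~\ref{reducible}); the strict drop of the spectral radius on proper principal submatrices of an irreducible non-negative matrix, together with the real/conjugate-pair factorization of $\det(\alpha I-N')$; and the block-triangular contradiction showing an almost non-singular $M$-matrix with zero determinant must be irreducible. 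The one place you move too fast is in the converse, where you write ``$M=\alpha I-N$ with $\alpha\ge\rho(N)$'': Definition~\ref{almost-non} defines an almost non-singular $M$-matrix purely by the sign conditions on its principal minors, so the existence of such a representation is not immediate from the definition despite the name. It does follow --- e.g.\ from Theorem~\ref{Theorem 2.6 from Corrales and Valencia}, since $M+\epsilon I=(\alpha+\epsilon)I-N$ is then a non-singular $M$-matrix for every $\epsilon>0$, giving $\alpha+\epsilon>\rho(N)$ and hence $\alpha\ge\rho(N)$ in the limit --- but that half-line deserves to be said, as it is exactly the Fiedler--Pt\'ak-type equivalence between the principal-minor and spectral-radius descriptions of (possibly singular) $M$-matrices. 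With that sentence added, the proof is complete.
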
 

\section{Some Properties of Arithmetical Structures on Wheel Graphs}

In this section, we derive certain necessary and sufficient conditions for the entries of the pair of vectors $(\d,\r)$ such that it is an element of $\A(W_n).$ First, we observe if $\r=(r_0,r_1,\hdots,r_{n})$ is an  $\r$-structure of $W_n$ then from equation (\ref{eqn:vcond}) the following relations hold. \begin{eqnarray}\label{rstructure}
\begin{cases}
    r_0\, \, \mbox{divides} \,\, \sum_{j=1}^{n} r_j \\
  r_i \;\; \mbox{divides} \;\; r_0+r_{i-1}+r_{i+1},\;\; \forall i\in [n]
\end{cases}
\end{eqnarray}  Conversely, if relations given by equation (\ref{rstructure}) are satisfied  and $gcd\{r_i: 0\le i\le n\}=1$ then  $\r=(r_0,r_1,\hdots,r_{n})$ is an arithmetical  $\r$-structure on $W_n$.   In particular, for the degree vector $\d$ of $W_n$ with $d_0=n-1,$ $d_j=3$, $j=1,\hdots,n$ and $\r={\bf 1}_n$, the all-one vector of dimension $n$, $(\d,\r)\in \A(W_n).$ This follows from the fact that the all-one vector is an eigenvector corresponding to the Laplacian eigenvalue zero.

Before moving to next result we will fix some notations. If $\textbf{d} , \textbf{a} \in $ $\mathbb{R}^V $ then, we say that $\textbf{d} \leq \textbf{a}$ if and only if $\textbf{d}_v \leq \textbf{a}_v$ $\forall \, v \in V$, where $\leq$ is a partial order in $\mathbb{R}^V$.
For $\textbf{d} , \textbf{a} \in $ $\mathbb{N}^V $  we say that $\textbf{d} < \textbf{a}$ if and only if $\textbf{d} \leq \textbf{a}$ and $\textbf{d} \neq \textbf{a}$. Now, we have the following theorem:


\begin{theorem} \label{Theorem for unique Laplacian Arith Structure on Wheel graph}
Let $W_{n}$ be the wheel graph and $\textbf{(d,r)}\in \A(W_n).$  If $\r \neq {\bf 1}_{n+1}^t$ then one of the following holds:
\begin{enumerate}
    \item $ d_0> n$ and $d_i<3,$ for some $i>0$.
    \item $d_0< n$ and $d_i>3$, for some $i>0$.
    \item $d_0=n$ and $d_i<3$, $d_j>3,$ for some $0< i\neq j\le n.$
\end{enumerate}
If $d_i=1$ for some $i$,  then $\textbf{d} _u >1$  for all $u \in N_{W_{n}}(v_i)$, where $N_{W_{n}}(v_i)$ is the neighbourhood of $i$th vertex $v_i$ in $W_{n}$.

\end{theorem}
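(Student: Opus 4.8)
The plan is to argue directly from the vertex equations; the $M$-matrix results of Section~2 are not needed. Writing $\mathbf r=(r_0,r_1,\dots,r_n)$ and taking outer indices cyclically in $\{1,\dots,n\}$, the condition $(\diag(\mathbf d)-A(W_n))\mathbf r=0$ is
\[
d_0 r_0=\sum_{j=1}^n r_j,\qquad d_i r_i=r_0+r_{i-1}+r_{i+1}\quad (i\in[n]).
\]
Since all $r_j>0$, every $d_j$ is a positive integer. The whole argument rests on one summation identity, so the first thing I would do is derive it.

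Summing the $n$ outer equations (as $i$ runs over $[n]$, each of $r_{i-1}$ and $r_{i+1}$ runs over $r_1,\dots,r_n$) and then substituting $\sum_{j=1}^n r_j=d_0 r_0$ gives
\[
\sum_{i=1}^n (d_i-3)\,r_i=\Bigl(\sum_{i=1}^n d_i r_i\Bigr)-3\sum_{i=1}^n r_i=\bigl(n r_0+2 d_0 r_0\bigr)-3 d_0 r_0=(n-d_0)\,r_0.
\]
Because $r_0>0$ and every $r_i>0$, the sign of the left-hand side matches the sign of $n-d_0$: if $d_0>n$ some summand is negative, so $d_i<3$ for some $i>0$ (case~1); if $d_0<n$ some summand is positive, so $d_i>3$ for some $i>0$ (case~2); and if $d_0=n$ the sum is $0$. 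In this last situation I still have to exclude $d_1=\dots=d_n=3$, and this is the one place a genuine small argument is needed: a max/min squeeze. If every outer $d_i$ were $3$, then choosing $k$ with $r_k=\max_{i\in[n]}r_i$ yields $3r_k=r_0+r_{k-1}+r_{k+1}\le r_0+2r_k$, so $r_k\le r_0$, and choosing $l$ with $r_l=\min_{i\in[n]}r_i$ yields symmetrically $r_l\ge r_0$; hence $r_1=\dots=r_n=r_0$, and then $\mathbf r=\mathbf 1_{n+1}$ by primitivity, contradicting the hypothesis. Therefore the zero sum $\sum_i(d_i-3)r_i=0$ contains both a strictly negative and a strictly positive term, i.e.\ there are $0<i\ne j\le n$ with $d_i<3<d_j$ (case~3).

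For the final assertion, suppose $d_i=1$ for some outer vertex $v_i$; its equation becomes $r_i=r_0+r_{i-1}+r_{i+1}$, so $r_i$ strictly exceeds each of $r_0$, $r_{i-1}$, $r_{i+1}$. The neighbours of $v_i$ in $W_n$ are $v_0$, $v_{i-1}$, $v_{i+1}$. If $d_0=1$, then $r_0=\sum_{j=1}^n r_j\ge r_{i-1}+r_i+r_{i+1}=r_0+2(r_{i-1}+r_{i+1})>r_0$ — here $n\ge 3$ guarantees $i-1,i,i+1$ are distinct indices in $[n]$ — a contradiction. If $d_{i-1}=1$, the equation at $v_{i-1}$ reads $r_{i-1}=r_0+r_{i-2}+r_i>r_i$, contradicting $r_i>r_{i-1}$; the case $d_{i+1}=1$ is identical. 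Hence $d_u>1$ for every $u\in N_{W_n}(v_i)$.

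I expect no serious obstacle: the only substantive step is the maximum/minimum squeeze excluding ``all outer degrees equal $3$'' in the case $d_0=n$, and the only thing demanding care is keeping the cyclic indexing straight (and invoking $n\ge 3$) in the degree-one inequalities.
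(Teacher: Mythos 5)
Your proof is correct, but it follows a genuinely different route from the paper's. The paper deduces the trichotomy from the $M$-matrix machinery of Section~2 (Theorems~\ref{Theorem 2.6 from Corrales and Valencia} and \ref{Theorem 3.2 from Corrales and Valencia}): since $L(W_n,\d)$ and $L(W_n,(n,3,\dots,3))$ are both singular almost non-singular $M$-matrices and $\d\neq(n,3,\dots,3)$, determinant monotonicity forces $\d$ to be incomparable with the standard degree vector, and the three cases are read off from the position of $d_0$ relative to $n$. You instead derive everything from the explicit identity $\sum_{i=1}^n(d_i-3)r_i=(n-d_0)r_0$, obtained by summing the rim equations and substituting the hub equation, which is elementary, self-contained, and makes the sign analysis in cases 1--3 immediate; your max/min squeeze replacing the step ``$d_0=n$ and all $d_i=3$ forces $\r=\mathbf 1$'' is a clean substitute for the paper's appeal to irreducibility/uniqueness of the kernel of the Laplacian. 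What the paper's approach buys is generality (the same incomparability argument works for any graph against any fixed structure, not just the Laplacian one on $W_n$) and brevity given the cited theorems; what yours buys is a quantitative identity and a proof readable without the $M$-matrix background. For the final assertion your treatment is actually more complete than the paper's one-line ``this gives a contradiction'': you verify both adjacency types (hub--rim and rim--rim) cannot simultaneously carry $d$-value $1$. One presentational remark: you open that paragraph with ``suppose $d_i=1$ for some \emph{outer} vertex,'' but the statement also covers $i=0$; since the claim is equivalent to ``no two adjacent vertices both have $d$-value $1$'' and your two incompatibility arguments exhaust all adjacencies of $W_n$, the case $i=0$ is already covered --- it would be worth saying so explicitly.
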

\begin{proof}
Let $W_{n}$ be a wheel graph with vertices   $v_0, v_1, v_2,  \ldots v_n $,   where $v_0$ is the central vertex with degree $n$ and $v_1, v_2 \ldots, v_n$ are the vertices of cycle with degree $3$ each and $ v_{n+1} = v_n$. Since $\textbf{(d,r)} \neq ((n, 3{\bf 1}_{n-1}^t), {\bf 1}_{n}^t) \in \A 
  (W_{n-1})$ and $0 < \textbf{d}$, by using Theorem (\ref{Theorem 2.6 from Corrales and Valencia}) and Theorem (\ref{Theorem 3.2 from Corrales and Valencia}), we get that neither $ ((n),3,3,3, \cdots,3) \geq \textbf{d}$ nor  $ \textbf{d}\geq((n),3,3,3, \cdots,3)$. From this we conclude that  one of the following holds: 
  \begin{enumerate}
    \item $ d_0> n$ and $d_i<3,$ for some $i>0$.
    \item $d_0<n$ and $d_i>3$, for some $i>0$.
    \item $d_0=n$ and $d_i<3$, $d_j>3,$ for some $0< i,j\le n.$
\end{enumerate}
  Now as $L(W_{n},\textbf{d}) \textbf{r}^t = \textbf{0}^t$, we get 
  \begin{equation} \label{nmt1}
  \begin{aligned}
    r_0d_0 &= r_1 + r_2 + \cdots + r_n \\
    d_ir_i &= r_0 + r_{i-1} + r_{i+1}, \forall i\in [n] \\
    \end{aligned}
    \end{equation}
Let  $d_i=1$ for some $i\ge 0$.  We have $N_{W_{n}}(v_i)=\{ v_{i-1}, v_{i+1}, v_0\}$  $\forall i\in [n]$,   $N_{W_{n}}(v_i)=\{ v_1, v_2, \ldots, v_n\}$ for $i=0$.  Suppose that  $\textbf{d} _u =1$  for some  $u \in N_{W_{n}}(v_i)$. Using Equation (\ref{nmt1}) this gives  a contradiction.
\end{proof}


The following propositions provide  a class of elements of $\A(W_n).$

\begin{proposition}
There is an arithmetical structure $(\textbf{d}, \textbf{r})$ on $W_n$ such that $\textbf{r} = (n, 1, 1, \ldots, 1). $.    
\end{proposition}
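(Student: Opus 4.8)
The plan is to exhibit an explicit arithmetical structure with $\r = (n,1,1,\dots,1)$ and verify it satisfies the divisibility conditions \eqref{rstructure}. First I would check the central-vertex condition: with $r_0 = n$ and $r_1 = \dots = r_n = 1$, we need $r_0 = n$ to divide $\sum_{j=1}^n r_j = n$, which holds trivially, forcing $d_0 = 1$. Next, for each cycle vertex $i \in [n]$ we need $r_i = 1$ to divide $r_0 + r_{i-1} + r_{i+1} = n + 1 + 1 = n+2$; since $r_i = 1$, this divisibility is automatic, and it gives $d_i = n+2$ for all $i \in [n]$. Finally I would confirm $\gcd\{n,1,1,\dots,1\} = 1$, so $\r$ is primitive. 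Hence $\d = (1, n+2, n+2, \dots, n+2)$ and $\r = (n,1,\dots,1)$ form an arithmetical structure on $W_n$.

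Alternatively, and perhaps more in the spirit of the surrounding development, I would phrase the verification through the matrix equation: set $L(W_n,\d) = \diag(\d) - A(W_n)$ with $\d$ as above and check directly that $L(W_n,\d)\,\r^t = \mathbf{0}^t$ by computing each coordinate of the product using the adjacency structure \eqref{eqn:adjmwn}. The $0$-th coordinate reads $d_0 r_0 - \sum_{j=1}^n r_j = 1\cdot n - n = 0$; the $i$-th coordinate for $i \in [n]$ reads $d_i r_i - (r_0 + r_{i-1} + r_{i+1}) = (n+2)\cdot 1 - (n + 1 + 1) = 0$, using the cyclic convention $v_{n+1} = v_1$. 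Then one invokes Theorem~\ref{Theorem 3.2 from Corrales and Valencia} to note that $L(W_n,\d)$ is automatically an (almost non-singular, since $W_n$ is connected hence $L(W_n,\d)$ is irreducible) $M$-matrix, though for the bare existence claim this observation is not strictly needed.

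I do not expect any real obstacle here: the statement is essentially a worked example, and the only thing to be careful about is the indexing convention at the ends of the cycle (that $v_0$'s neighbours are all of $v_1,\dots,v_n$, and that vertex $v_1$ and vertex $v_n$ are adjacent, so the ``$r_{i-1}+r_{i+1}$'' term genuinely has value $2$ for every $i$, including $i=1$ and $i=n$). One should also note in passing that this structure is distinct from the Laplacian structure $((n-1,3,\dots,3),\mathbf{1}_{n+1})$ whenever $n \geq 2$, so it is a genuinely new element of $\A(W_n)$; this is immediate since $\r \neq \mathbf{1}_{n+1}^t$. The write-up should therefore be short: state $\d$, verify the $n+1$ scalar equations in \eqref{rstructure} (or the single matrix equation), and confirm primitivity of $\r$.
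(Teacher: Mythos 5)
Your proposal is correct and follows exactly the paper's route: the paper's proof is a one-line assertion that $\r=(n,1,\dots,1)$ satisfies the divisibility relations \eqref{rstructure}, and you simply carry out that verification explicitly (obtaining $\d=(1,n+2,\dots,n+2)$ and confirming primitivity). No gaps; your write-up is just a more detailed version of the same argument.
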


\begin{proof}
Since $\textbf{r}$ satisfies (\ref{rstructure}), we have thus produced an arithmetical structure on $W_n$.    
\end{proof}

\begin{proposition}  If $k$ divides $n$, then $\textbf{(d,r)}\in \A(W_n)$, where  $\textbf {d} = (\frac{n}{k},2+k,\cdots,2+k)$ and $\textbf{r} = (k,1,\cdots,1).$ 
\end{proposition}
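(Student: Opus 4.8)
The plan is to verify directly that the proposed pair $(\textbf{d},\textbf{r})$ with $\textbf{d} = (n/k,\,2+k,\,\ldots,\,2+k)$ and $\textbf{r}=(k,1,\ldots,1)$ satisfies the defining relations of an arithmetical structure on $W_n$, namely the divisibility/consistency conditions in (\ref{rstructure}) together with primitivity of $\textbf{r}$. Since $k\mid n$, the hypothesis guarantees $n/k\in\mathbb{N}$, so $\textbf{d}$ is a genuine positive integer vector; and $\gcd\{k,1,\ldots,1\}=1$, so $\textbf{r}$ is primitive. It then remains only to check $L(W_n,\textbf{d})\textbf{r}^t=\textbf{0}^t$, equivalently the $n+1$ scalar equations $d_0 r_0 = \sum_{j=1}^n r_j$ and $d_i r_i = r_0 + r_{i-1}+r_{i+1}$ for $i\in[n]$ (indices of the cycle taken modulo $n$).

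First I would check the central equation: $d_0 r_0 = (n/k)\cdot k = n$, while $\sum_{j=1}^n r_j = \sum_{j=1}^n 1 = n$, so this holds. Next I would check a cycle equation for an arbitrary $i\in[n]$: here $r_i = 1$, $r_0 = k$, and $r_{i-1}=r_{i+1}=1$, so the right-hand side is $k+1+1 = k+2$, and the left-hand side is $d_i r_i = (2+k)\cdot 1 = k+2$. Hence every cycle equation holds as well. Therefore $(\textbf{d},\textbf{r})$ satisfies (\ref{eqn:mcond}), and combined with the primitivity of $\textbf{r}$ already noted, we conclude $(\textbf{d},\textbf{r})\in\A(W_n)$.

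There is essentially no obstacle here: the statement is a direct construction, and the only thing to be careful about is the bookkeeping with the cyclic indices $v_{n+1}=v_1$ (or $v_0$ in the convention fixed in the previous proof, $v_{n+1}=v_n$) so that the two ``boundary'' cycle vertices $v_1$ and $v_n$ are handled correctly; but since all $r_j$ for $j\ge 1$ are equal to $1$, the computation of $r_{i-1}+r_{i+1}=2$ is insensitive to which neighbours are picked, so no special case arises. I would simply remark that the verification reduces, via (\ref{rstructure}), to the two observations $n/k\cdot k=n$ and $(k+2)\cdot 1 = k+1+1$, and that $\gcd\{k,1\}=1$, completing the proof.
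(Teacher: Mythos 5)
Your verification is correct and follows essentially the same route as the paper: both proofs simply check that $L(W_n,\textbf{d})\textbf{r}^t=\textbf{0}^t$ holds for the given pair (the paper writes this as a single block-matrix identity, you write out the $n+1$ scalar equations), together with the observations that $k\mid n$ makes $d_0=n/k$ a positive integer and that $\gcd(k,1,\ldots,1)=1$ gives primitivity. Your version is, if anything, slightly more complete, since the paper does not explicitly note the primitivity of $\textbf{r}$.
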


\begin{proof}
 This follows because $\textbf{d} \in \mathbb{N_+}^n$, $\textbf{r} \in \mathbb{N_+}^n$, the wheel graph satisfies
$$\begin{pmatrix}
\cfrac{n}{k} & -\textbf{1}_n\\
 -\textbf{1}_{n}^t & L(C_n,(2+k) \textbf{1}_n)
 \end{pmatrix} \begin{pmatrix}
k\\
 \textbf{1}_n
 \end{pmatrix} = \bf{0},$$\\
where $L(C_n,(2+k) \textbf{1}_n) = \diag (2+k) - L(C_n).$
\end{proof}

%

\begin{proposition}
Let $C_n$  denote the cycle graph on $n$ vertices and $\textbf{(d,r)} \in \mathbb{N_+}^n \times \mathbb{N_+}^n$ be such that $L(C_n,\d)\r = a {\bf 1}_n$ with $a$ divides $\sum _{i=1} ^ n r_i$. If $ g = gcd(a,r_1,\cdots,r_n)$ then, $(\Tilde{\bf d},\Tilde{\bf r})\in \A(G)$, where 
 $$ \Tilde{\bf d} = \left(\frac{\sum _{i=1} ^ n r_i}{a},d_1,\cdots,d_n\right) \ and \ \Tilde{\bf r} = \left(\frac{a}{g},\frac{r_1}{g},\cdots,\frac{r_n}{g}\right).$$
\end{proposition}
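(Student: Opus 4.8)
The plan is to write the generalized Laplacian of $W_n$ in the block form coming from the join $W_n = C_n + K_1$ and then check directly that $\Tilde{\bf r}$ is a primitive positive integer vector lying in its kernel. Listing the centre $v_0$ first, the adjacency matrix of $W_n$ has $-A(C_n)$ in its bottom-right block and $-{\bf 1}_n$ coupling the centre to the rim, so for $\Tilde{\bf d} = (\Tilde d_0, d_1, \dots, d_n)$ one has
\[
L(W_n,\Tilde{\bf d}) \;=\; \begin{pmatrix} \Tilde d_0 & -{\bf 1}_n^t \\ -{\bf 1}_n & L(C_n,\mathbf d)\end{pmatrix},
\qquad L(C_n,\mathbf d) = \diag(\mathbf d) - A(C_n),
\]
where $\mathbf d = (d_1,\dots,d_n)$ is the given cycle vector, so the hypothesis reads $L(C_n,\mathbf d)\,\mathbf r^t = a\,{\bf 1}_n^t$.

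First I would settle integrality and primitivity. Since $g = \gcd(a,r_1,\dots,r_n)$ divides $a$ and each $r_i$, all entries of $\Tilde{\bf r} = (a/g, r_1/g, \dots, r_n/g)$ are positive integers, and $\gcd(a/g, r_1/g, \dots, r_n/g) = \gcd(a, r_1, \dots, r_n)/g = 1$, so $\Tilde{\bf r}$ is primitive. The entries $d_1,\dots,d_n$ of $\Tilde{\bf d}$ are positive by hypothesis, and $\Tilde d_0 = \big(\sum_{i=1}^n r_i\big)/a$ is a positive integer because $a$ is a positive integer dividing $\sum_{i=1}^n r_i$. (Positivity of $a$ is implicitly required by the statement, since $\sum_{i=1}^n r_i > 0$ and $\Tilde d_0$ must lie in $\mathbb{N}_+$.)

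Then I would verify $L(W_n,\Tilde{\bf d})\,\Tilde{\bf r}^t = {\bf 0}^t$ coordinate by coordinate. The centre coordinate gives $\Tilde d_0\cdot\tfrac{a}{g} - \tfrac1g\sum_{i=1}^n r_i = 0$, that is $\Tilde d_0\,a = \sum_{i=1}^n r_i$, which holds by the definition of $\Tilde d_0$. For $1\le i\le n$ (indices modulo $n$), the $i$-th rim coordinate gives $d_i\tfrac{r_i}{g} - \tfrac{r_{i-1}}{g} - \tfrac{r_{i+1}}{g} - \tfrac{a}{g} = 0$, i.e. $d_i r_i - r_{i-1} - r_{i+1} = a$, which is precisely the $i$-th coordinate of $L(C_n,\mathbf d)\,\mathbf r^t = a\,{\bf 1}_n^t$. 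Combined with the previous paragraph, this shows $(\Tilde{\bf d},\Tilde{\bf r}) \in \A(W_n)$.

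Since everything reduces to bookkeeping, I do not expect a genuine obstacle; the only points needing care are noting that $a$ must be taken positive for the statement to make sense, and using the gcd identity $\gcd(a/g,r_1/g,\dots,r_n/g)=1$ — the condition that upgrades a kernel vector to a bona fide arithmetical $\mathbf r$-structure. I would also remark that this proposition contains the two preceding ones as special cases: taking $\mathbf d = (2+k){\bf 1}_n$ and $\mathbf r = {\bf 1}_n$ gives $L(C_n,\mathbf d)\,\mathbf r^t = k\,{\bf 1}_n^t$, and the construction then returns $\Tilde{\bf d} = (n/k, 2+k,\dots,2+k)$, $\Tilde{\bf r} = (k,1,\dots,1)$.
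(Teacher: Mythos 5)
Your proof is correct and follows essentially the same route as the paper's: write $L(W_n,\Tilde{\bf d})$ in the block form $\begin{pmatrix} \Tilde d_0 & -{\bf 1}_n^t \\ -{\bf 1}_n & L(C_n,\mathbf d)\end{pmatrix}$ and verify directly that $\Tilde{\bf r}$ lies in its kernel. You are in fact somewhat more careful than the paper, which omits the explicit check of primitivity via $g=\gcd(a,r_1,\dots,r_n)$ and the observation that $a$ must be positive for $\Tilde d_0$ to be a positive integer.
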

\begin{proof}
This follows because  $\Tilde{\bf d} \in \mathbb{N_+}^{n+1}$ , $\Tilde{\bf r} \in \mathbb{N_+}^{n+1}$ and 
$$ \begin{pmatrix}
   \cfrac{c}{a} & -\bf{1_n}\\
   -\bf{1_n}^t  & L(C_n,\bf {d})
\end{pmatrix} \begin{pmatrix}
 \cfrac{a}{g}\\
 \cfrac{\bf{r}}{g}
\end{pmatrix} = \bf{0}.$$ Then the proof follows.
\end{proof}


\subsection{$\A(W_n)$ from $\A(C_n)$}


Consider a cycle graph $C_n$ on $n$ vertices. Let $\r = (r_1, \ldots, r_n)$ be a primitive positive integer vector. Then $\textbf{r}\in \A_{\r}(C_n)$ if and only if $r_i|(r_{i-1} + r_{i+1}), i \in \{1,\hdots,n\}$ with the indices taken modulo n \cite{BHDNJC18}.
It follows from \cite{BHDNJC18} that for each $k \in [n],$ the number of arithmetical structures $(\d, \r)$ on $C_n$ with $\textbf{r}(1) = k,$ the number of $1$'s in $\r,$
is $\begin{pmatrix}
    2n-k-1\\
    n-k
\end{pmatrix}$
and 
$|\A(C_n)| = \begin{pmatrix}
    2n-1\\
    n-1
\end{pmatrix}.$

The next theorem describes an arithmetical structure on $W_n$ derived from arithmetical structure on cycle graph $C_n$.

\begin{theorem} \label{arithCntoWn} 
If $(\d,\r)\in \A(C_n)$ such that $r_i$ divides $\sum_{j=1 \atop j\neq i}^{n} r_j$, for all $1\le i\le n$,  then $W_n$ has an arithmetical structure $(\tilde{\textbf{d}},\tilde{\textbf{r}})$ given by 
 $$  \tilde{d_i}  = \begin{cases}
         1 & \text{if}\;i = 0\\
           d_i + \frac{\sum_{j=1}^{n}r_j}{r_i} & \text{if}\;  1\leq i\leq n\\    
   \end{cases}$$
   $$\tilde{r_i}  =  \begin{cases}
        r_1+r_2+\cdots+r_{n} &\text{if}\;  i =0\\
        r_i & Otherwise \\
   \end{cases}
$$
  
\end{theorem}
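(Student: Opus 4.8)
The plan is to verify directly that the pair $(\tilde{\textbf{d}},\tilde{\textbf{r}})$ satisfies the defining vertex condition~(\ref{eqn:vcond}) on $W_n$, equivalently the system~(\ref{rstructure}), and then that $\tilde{\textbf{r}}$ is primitive. The structure of $W_n$ splits the verification into the central vertex $v_0$ and the rim vertices $v_1,\dots,v_n$, so I would handle these two cases separately after setting $R := \sum_{j=1}^n r_j$ for brevity.

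First I would check the rim condition. For $1\le i\le n$ the neighbours of $v_i$ in $W_n$ are $v_{i-1}$, $v_{i+1}$ (indices mod $n$) and $v_0$, so I need $\tilde d_i \tilde r_i = \tilde r_{i-1} + \tilde r_{i+1} + \tilde r_0$. Since $\tilde r_j = r_j$ for $j\ge 1$ and $\tilde r_0 = R$, the right-hand side is $r_{i-1}+r_{i+1}+R$. On the left, $\tilde d_i \tilde r_i = \big(d_i + R/r_i\big) r_i = d_i r_i + R$. Because $(\d,\r)\in\A(C_n)$, the cycle condition gives $d_i r_i = r_{i-1}+r_{i+1}$, so the left side equals $r_{i-1}+r_{i+1}+R$, matching. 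Here I would note that the hypothesis ``$r_i$ divides $\sum_{j\neq i} r_j$'' is exactly what guarantees $R/r_i = (r_i + \sum_{j\neq i} r_j)/r_i \in \mathbb{N}$, so $\tilde d_i$ is a positive integer; positivity is clear since $d_i\ge 1$. Next, the central condition: the neighbours of $v_0$ are all of $v_1,\dots,v_n$, so I need $\tilde d_0 \tilde r_0 = \sum_{j=1}^n \tilde r_j = R$. With $\tilde d_0 = 1$ and $\tilde r_0 = R$ this is immediate.

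Finally I would confirm primitivity of $\tilde{\textbf{r}} = (R, r_1,\dots,r_n)$. Since $(\d,\r)\in\A(C_n)$, the vector $\r=(r_1,\dots,r_n)$ is primitive, i.e. $\gcd(r_1,\dots,r_n)=1$; any common divisor of all entries of $\tilde{\textbf{r}}$ in particular divides $r_1,\dots,r_n$, hence equals $1$. (The extra coordinate $R$ can only shrink the gcd, never enlarge it.) Having verified both the divisibility/eigenvector relations~(\ref{rstructure}) and primitivity, $(\tilde{\textbf{d}},\tilde{\textbf{r}})\in\A(W_n)$.

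I do not anticipate a genuine obstacle here; the argument is a direct substitution once the bookkeeping of indices and the role of the divisibility hypothesis are pinned down. The one point that needs a little care is making sure the hypothesis ``$r_i \mid \sum_{j\neq i} r_j$ for all $i$'' is used correctly to secure integrality of $\tilde d_i$ (equivalently $r_i \mid R$), and checking that this condition is not automatically implied by $\r\in\A_{\r}(C_n)$ — it is a genuine extra assumption, which is why the theorem states it explicitly. One could also present the whole computation compactly as the single matrix identity $L(W_n,\tilde{\textbf{d}})\,\tilde{\textbf{r}}^t = \mathbf{0}^t$ in block form with central block $\tilde d_0 = 1$, paralleling the earlier propositions, but the per-vertex verification is cleaner for exhibiting where each hypothesis enters.
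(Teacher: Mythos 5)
Your proposal is correct and follows essentially the same route as the paper: the paper's proof simply asserts that $(\diag(\tilde{\textbf{d}})-\tilde{A})\tilde{\textbf{r}}=0$ can be verified by direct computation, and you carry out exactly that verification vertex by vertex, additionally making explicit the integrality of $\tilde d_i$ (where the divisibility hypothesis enters) and the primitivity of $\tilde{\textbf{r}}$, both of which the paper leaves implicit.
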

\begin{proof}
    Let $A$ denote the adjacency matrix of $C_{n}$. Observe that adjacency matrix of $W_n$ is given by 
   $$\tilde{A}= \begin{pmatrix}
    0 & {\bf 1}_{n}^t\\
    {\bf 1}_n & A
\end{pmatrix},$$
By direct computation one can verify that $(\diag(\Tilde{\textbf{d}}) - \Tilde{A} )\tilde{\textbf{r}} = 0$. Hence the result follows.
\end{proof}

\begin{example}
Consider the cycle $C_3$. Then the arithmetical structure $(\d, \r)$ on $C_3$ is given by 
\begin{center}
    \begin{tabular}{ | l | l | l| p{5cm} | }
    \hline
   $\d = (d_1, d_2, d_3)$ & \r= $(r_1, r_2, r_3)$  & \#\\
   \hline
(2, 2, 2) & (1, 1, 1) & 1 \\
(3, 3, 1) & (1, 1, 2) & 3 \\
(5, 2, 1) & (1, 2, 3) & 6 \\
     \hline
    \end{tabular}
\end{center}
Now, by the Theorem~\ref{arithCntoWn}, the below table gives some arithmetical structures $(\tilde{\d}, \tilde{\r})$ of $W_3$ are  follows:
\begin{center}
    \begin{tabular}{ | l | l | l| p{5cm} | }
    \hline
   $\tilde{\d} = (\tilde{d}_0, \tilde{d}_1, \tilde{d}_2, \tilde{d}_3)$ & $\tilde{\r}= \tilde{r}_0, \tilde{r}_1, \tilde{r}_2,\tilde{r}_3)$  & \#\\
   \hline
(1, 5, 5, 5) & (3, 1, 1, 1) & 4 \\
(1, 7, 7, 3) & (4, 1, 1, 2) & 12 \\
(1, 11, 5, 3) & (6, 1, 2, 3) & 24 \\
     \hline
    \end{tabular}
\end{center}
\end{example}

We now define an action of $\mathbb{Z}_n$ on $\A_{\r}(W_n).$ This action provides many arithmetical structures on $W_n$. 

\begin{theorem}\label{action}
There exists an action of $\mathbb{Z}_n=\{1,2,\hdots,n\}$ on the set $\A_{\r}(W_n).$ 
\end{theorem}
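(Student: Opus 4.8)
The plan is to exhibit an explicit group action of the cyclic group $\mathbb{Z}_n$ on $\A_{\r}(W_n)$ by cyclically permuting the coordinates corresponding to the rim vertices $v_1,\hdots,v_n$ while fixing the coordinate of the hub $v_0$. Concretely, for $k\in\mathbb{Z}_n$ and $\tilde{\r}=(r_0,r_1,\hdots,r_n)\in\A_{\r}(W_n)$, define $k\cdot\tilde{\r}=(r_0,r_{1+k},r_{2+k},\hdots,r_{n+k})$ with rim indices read modulo $n$ (in the representative set $\{1,\hdots,n\}$). First I would verify this is well-defined, i.e.\ that $k\cdot\tilde{\r}$ is again in $\A_{\r}(W_n)$: primitivity is preserved since the multiset of entries is unchanged, and the divisibility conditions of \eqref{rstructure} are preserved because the hub condition $r_0\mid\sum_{j=1}^n r_j$ is symmetric in the rim entries, and each rim condition $r_i\mid r_0+r_{i-1}+r_{i+1}$ simply gets relabelled to another rim condition of the same shape under the cyclic shift (here one uses that $W_n$ has a rotational automorphism fixing $v_0$ and rotating the cycle $C_n$). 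Equivalently, one can phrase this via the adjacency matrix: the permutation matrix $P$ realizing the rotation satisfies $P^{-1}A(W_n)P=A(W_n)$ and fixes the first coordinate, so $(\diag(\d)-A)\r=0$ is carried to $(\diag(\d')-A)\r'=0$ with $\d'$ the correspondingly permuted $\d$.

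Next I would check the group axioms: the identity $n\equiv 0$ acts trivially, and $k\cdot(l\cdot\tilde{\r})=(k+l)\cdot\tilde{\r}$ because composing two cyclic shifts of the rim is again a cyclic shift by the sum of the amounts, all modulo $n$. This establishes that the assignment $k\mapsto(\tilde{\r}\mapsto k\cdot\tilde{\r})$ is a homomorphism from $\mathbb{Z}_n$ into the symmetric group on the finite set $\A_{\r}(W_n)$, which is exactly the data of a $\mathbb{Z}_n$-action. I would note in passing that the orbit of the all-one vector ${\bf 1}_{n+1}$ is a singleton, while orbits of less symmetric structures (such as those produced in Theorem~\ref{arithCntoWn}) can have size a nontrivial divisor of $n$, which is presumably the point of recording this action for later enumeration purposes.

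The argument has no real obstacle; the only place demanding a line of care is making the indexing rigorous, since the statement writes $\mathbb{Z}_n=\{1,2,\hdots,n\}$ rather than $\{0,1,\hdots,n-1\}$, so one must be explicit that arithmetic on rim indices is performed modulo $n$ with values taken in $\{1,\hdots,n\}$ and that this convention is consistent with the convention $v_{n+1}=v_n$ — more precisely $v_{n+1}=v_1$ — used in \eqref{rstructure}. I would therefore state the modular convention once at the outset of the proof and then let the verification of well-definedness and of the two group axioms proceed as short routine checks, either coordinate-wise against the divisibility relations \eqref{rstructure} or, more cleanly, by invoking the rotational graph automorphism of $W_n$ that fixes $v_0$.
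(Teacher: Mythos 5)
Your proposal is correct and follows essentially the same route as the paper: both define the action of $\mathbb{Z}_n$ by cyclically shifting the rim coordinates $r_1,\hdots,r_n$ (with indices taken modulo $n$) while fixing the hub coordinate $r_0$, and both justify well-definedness via the divisibility relations \eqref{rstructure}. Your write-up merely supplies the routine verifications (preservation of primitivity, relabelling of the rim conditions, and the homomorphism property) that the paper leaves as ``clear.''
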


\begin{proof}
 Denote the set of all bijective functions from $\A_{\r}(W_n)$ to itself by $Iso(\A_{\r}(W_n))$. We define an   action $\rho$ of cyclic group $\mathbb{Z}_n$ on the set  $\A_{\r}(W_n)$, that is, a group homomorphism  $\rho:\mathbb{Z}_n\to Iso(\A_{\r}(W_n))$  by
$$ \rho(c)(r_0, r_1, \cdots, r_n) = (r_0, r_{c+1},\cdots,r_n,r_1,\cdots,r_c)=(r_0,r_{c\oplus 1},r_{c\oplus 2}, \hdots, r_{c\oplus n}),$$ where $\oplus$ is modulo $n$ addition for any $c\in \mathbb{Z}_n$  and $ (r_0, r_1, \cdots, r_n) \in \A_{\r}(W_n)$. 
By using the relations \ref{rstructure}, it is clear that $\rho$ is well defined action.
\end{proof}



 \begin{example}
With respect to $\rho $ action of $\mathbb{Z}_3$ on the set $ Arith(W_3)$ the  $\rho$-orbit of arithmetical $r$-structure  $(1,6,2,3)$  on $W_3$ is given by
$$\{(1,6,2,3), (1,2,3,6), (1,3,6,2)\}.$$
 \end{example}
 
\begin{theorem}
 Let $(\d,\r)\in \A(C_n)$ and $p=l.c.m. \{r_i: \ 1 \leq i \leq n\}$.  If $\tilde{r}_0\in \mathbb{N}$ such that $p$ divides $\tilde{r}_0$, $\tilde{r}_0$ divides $\sum_{i=1}^n r_i$ and ${\bf {\tilde{r}}} = ( \tilde{r}_0, r_1, \cdots, r_n)$,  then $\bf{\tilde{r}} \in \A_{\r}(W_n)$. 
\end{theorem}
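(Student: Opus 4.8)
The plan is to verify directly that $\tilde{\mathbf r} = (\tilde r_0, r_1, \dots, r_n)$ satisfies the divisibility characterization of $\mathbf r$-structures on $W_n$ recorded in \eqref{rstructure}, namely that $\tilde r_0$ divides $\sum_{j=1}^n r_j$ and that each $r_i$ divides $\tilde r_0 + r_{i-1} + r_{i+1}$ (indices mod $n$), together with primitivity. The first divisibility is given to us by hypothesis. For the second, I would use the fact that $(\mathbf d, \mathbf r) \in \A(C_n)$, which by the cycle characterization (quoted from \cite{BHDNJC18} at the start of Section~3.1) means precisely that $r_i \mid r_{i-1} + r_{i+1}$ for every $i \in [n]$. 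Since $p = \operatorname{lcm}\{r_1, \dots, r_n\}$ is divisible by each $r_i$, and $p \mid \tilde r_0$ by hypothesis, we get $r_i \mid \tilde r_0$; adding this to $r_i \mid r_{i-1} + r_{i+1}$ yields $r_i \mid \tilde r_0 + r_{i-1} + r_{i+1}$, which is the required relation for the $i$-th outer vertex in $W_n$.

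The remaining point is primitivity of $\tilde{\mathbf r}$. Here I would argue that $\gcd(\tilde r_0, r_1, \dots, r_n)$ divides $\gcd(r_1, \dots, r_n)$, and the latter divides $\gcd(\mathbf r) = 1$ because $(\mathbf d,\mathbf r)$ is an arithmetical structure on $C_n$ and hence $\mathbf r$ is primitive as a vector in $\mathbb N^n$. Actually one should be slightly careful: primitivity of $\mathbf r$ on $C_n$ already gives $\gcd(r_1,\dots,r_n)=1$, so $\gcd(\tilde r_0, r_1,\dots,r_n) = 1$ immediately, with no need to invoke $\tilde r_0$ at all. This shows $\tilde{\mathbf r}$ is a primitive positive integer vector satisfying \eqref{rstructure}, and therefore $\tilde{\mathbf r} \in \A_{\mathbf r}(W_n)$ by the converse direction of the characterization stated just after \eqref{rstructure}.

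I do not anticipate a genuine obstacle here; the statement is essentially a bookkeeping consequence of the two characterizations (for $C_n$ and for $W_n$) combined with the role of the least common multiple. The one place to be attentive is the indexing convention: in $W_n$ the outer cycle relations $r_i \mid \tilde r_0 + r_{i-1} + r_{i+1}$ must be read with indices modulo $n$ (so $r_0$ in that formula refers to $r_n$ and $r_{n+1}$ refers to $r_1$), exactly matching the wrap-around in the $C_n$ characterization; I would state this explicitly to avoid confusion with the central coordinate, which I have renamed $\tilde r_0$. A secondary remark worth including: the hypotheses are consistent only when $p \mid \sum_{i=1}^n r_i$ admits a divisor $\tilde r_0$ that is a multiple of $p$, so the theorem is vacuous unless $p \mid \sum_{i=1}^n r_i$; this is not needed for the proof but clarifies when the conclusion is non-empty. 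With these observations in place the proof is a two-line verification, which I would present as such.
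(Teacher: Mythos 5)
Your proposal is correct and follows essentially the same route as the paper: both use $r_i \mid p \mid \tilde r_0$ together with the cycle relation $r_i \mid r_{i-1}+r_{i+1}$ to get the outer-vertex divisibilities, and the hypothesis $\tilde r_0 \mid \sum_i r_i$ for the central vertex, concluding via the characterization in \eqref{rstructure}. Your explicit check of primitivity (from $\gcd(r_1,\dots,r_n)=1$) is a small point the paper leaves implicit, but it does not change the argument.
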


\begin{proof}
  Since $\textbf{(d,r)}  \in \A(C_n),$  we have $d_ir_i = r_{i-1} + r_{i+1}$, $i \in [n]$. Since $p|\tilde{r}_0,$  we have $r_i|\tilde{r}_0,$ $\forall 1\le i\le n.$ This implies that, for $1\le i\le n,$ 
  $$ \tilde{d}_i{r}_i  = \tilde{r}_0  + r_{i-1} + r_{i+1}, \,\,\, \forall i \in [n],$$ where, $\tilde{d}_i=\frac{\tilde{r}_0}{r_i}+ d_i.$  
  Since $\tilde{r}_0|\sum_{i=1}^nr_i$, 
      $$\tilde{d}_0\tilde{r}_0 = r_1 + \cdots + r_n,$$ for some $\tilde{d}_0\in \mathbb{N}.$
  Using the relations \ref{rstructure}, we conclude that $\bf{\tilde{r}}$ is an arithmetical $r$-structure on $W_n$.
\end{proof}

\begin{example}
 We will construct arithmetical r-structure on $W_3$ from arithmetical r-structure (1,3,2) on $C_3$. \\
 Here $r_1 = 1$, $r_2 = 2$, $r_3 = 3$. Put $\tilde{r}_0 = lcm(r_1 , r_2 , r_3)  = 6$. Hence (6,1,2,3) is an arithmetical r-structure on $W_3$.
\end{example}

\begin{note} We note that $(1,2,3,4)$ arithmetical $r$-structure   on $C_4$.  But above theorem is not applicable to it because its hypothesis is not satisfied by this. 
\end{note}

In \cite{corrales2018arithmetical}, the arithmetical structures on the clique–star transform of a graph has been studied. Given a graph $G$ and a clique $C$ (a set of pairwise adjacent vertices) of $G$, the clique–star transform of $G$, denoted by $(G, C),$ is the graph obtained from $G$ by deleting all the edges between the vertices in $C$ and adding a new vertex $v$ with all the edges between $v$ and the vertices in $C$.  recall following  relationship between For every  arithmetical structures $(\d,\r)$ on $G$ there exists an arithmetical structures $(\tilde{\d}, \tilde{\r})$ on 
cs$(G, C)$ given by \cite{corrales2018arithmetical}
\begin{equation} \label{csdu}
\tilde{d}_{u} = cs(\d, C)_{u}= \begin{cases}
    d_{u} & if u \notin C \\
    d_u +1 & if u \in C \\
    1 & if u =v \\
\end{cases}, \,\,\,
    \tilde{r}_{u} = cs(\r, C)_{u}= \begin{cases}
    r_{u} & if u \in V \\
    \sum_{u \in C} r_u & if u =v. \\
\end{cases}
\end{equation}
Now, consider the wheel graph $W_3$ and consider the clique $\{v_1, v_2, v_3, v_4.\}$.  Now,  apply the clique-star transformation on $W_3. $ Then we have a star graph $S_4$ with $5$ vertices say, $v_0, v_1, v_2, v_3, v_4.$ 
\begin{figure}[H]
\centering
\begin{tikzpicture}[scale=2, every node/.style={circle, draw, minimum size= 0.5cm}]
    \node [fill=blue!20] (v1) at (90:1) {$v_2$};
    \node [fill=blue!20] (v2) at (-30:1) {$v_3$};
    \node [fill=blue!20] (v3) at (210:1) {$v_4$};

    \node[fill=blue!20] (center) at (0,0) {$v_1$};

    \draw (v1) -- (v2) -- (v3) -- (v1);

    \draw (center) -- (v1);
    \draw (center) -- (v2);
    \draw (center) -- (v3);
\end{tikzpicture}

\caption{\label{fig: Wheel Grap} Wheel Graph $W_3$ with 4 vertices.}
\begin{tikzpicture}
\Vertex[label=$v_4$]{a}
\Vertex[label=$v_3$,x=3]{b}
\Vertex[label=$v_2$,y=3]{c}
\Vertex[label=$v_1$,y=2]{e}
\Vertex[label=$v_0$,x=3,y=2]{d}
\Edge(e)(d)
\Edge(c)(d)
\Edge(b)(d)
\Edge(a)(d)
\end{tikzpicture} 
\caption{  Star Graph $S_4$ with 5 vertices obtained from wheel graph $W_3$ by applying clique star transformation.}
\end{figure}

Then, $(D- A(S_4))\textbf{r} =0 $ implies
\begin{align*}
 d_{i}r_i = r_0, i=1, 2, 3, 4\\
 d_{0} r_0 = r_1+r_2+r_3+r_4. 
\end{align*}
That is $r_i| r_0$ and $r_0| r_1+r_2+r_3+r_4.$ Now, we get $d_0 = \frac{1}{d_1}+ \frac{1}{d_2}+\frac{1}{d_3}+\frac{1}{d_4}. $ Note that the numbers of solutions for $d_0 = \sum_{i=1}^{n} \frac{1}{d_i}$  for $n \leq 8$ are given by sequence A280517 in [Slo18]. The number of solutions for $n \leq 8$ is as follows: $1, 2, 14, 263, 13462, 2104021.$ So, the number of solutions for our case is $263$.\\
Next,  we have the following corollary. 
\begin{theorem}
Consider $W_3$. Then the cardinality of the set of  arithmetical structures $ (\textbf{d}, \textbf{r})$  is bounded above by $263$. 
\end{theorem}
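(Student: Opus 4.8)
The plan is to show that every arithmetical structure on $W_3$ gives rise, via the clique–star transformation of Corrales and Valencia, to an arithmetical structure on the star graph $S_4$, and that this correspondence is injective; since $|\A(S_4)|$ is known to be $263$ (the count of positive integer solutions to $d_0 = \sum_{i=1}^{4} 1/d_i$, sequence A280517), the bound follows immediately. Concretely, I would take the clique $C = \{v_1, v_2, v_3, v_4\}$ of $W_3$ — which is indeed a clique, since the three outer vertices form a triangle and each is adjacent to the hub $v_0$ — and apply the transformation $\mathrm{cs}(W_3, C)$ described in equation~(\ref{csdu}), deleting all edges inside $C$ and adding a new vertex joined to all of $C$; as noted in the text, the resulting graph is exactly the star $S_4$ on five vertices.

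The key steps, in order, are: (i) verify $C$ is a clique of $W_3$ and identify $\mathrm{cs}(W_3,C) \cong S_4$; (ii) invoke the cited result that for every $(\d,\r) \in \A(W_3)$ the pair $(\mathrm{cs}(\d,C), \mathrm{cs}(\r,C))$ lies in $\A(S_4)$, giving a well-defined map $\Phi\colon \A(W_3) \to \A(S_4)$; (iii) show $\Phi$ is injective — the $\r$-part of $\mathrm{cs}(\r,C)$ restricted to $V(W_3)$ recovers $\r$ exactly (only the new coordinate at $v$ is added, as $\sum_{u\in C} r_u$), and since in an arithmetical structure $\d$ is determined by $\r$ via equation~(\ref{eqn:mcond}), recovering $\r$ recovers $(\d,\r)$; (iv) conclude $|\A(W_3)| \le |\A(S_4)| = 263$, using the identification of $\A_{\d}(S_4)$ with solutions of $d_0 = \sum 1/d_i$ and the tabulated value $263$ for $n=4$ from A280517.

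The main obstacle — really the only substantive point — is establishing injectivity of $\Phi$ cleanly, i.e.\ arguing that distinct arithmetical structures on $W_3$ map to distinct structures on $S_4$. This is not hard but must be stated carefully: one must observe that the $\r$-vector of $W_3$ sits inside the $\r$-vector of $S_4$ as the coordinates indexed by the original vertices, so $\r$ is read off directly from $\mathrm{cs}(\r,C)$, and then the $\d$-vector of $W_3$ is uniquely determined by $\r$ through the defining linear system $L(W_3,\d)\r = 0$ (each $d_i = (\text{sum of } r \text{ over } N_{W_3}(v_i))/r_i$). A minor additional wrinkle is the primitivity/gcd condition — but this is automatically inherited, since $\gcd$ of the $W_3$-coordinates of $\mathrm{cs}(\r,C)$ divides $\gcd$ of all its coordinates, which is $1$. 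Finally, I would remark that the bound need not be tight, since not every structure on $S_4$ arises as $\Phi(\d,\r)$ for some $(\d,\r) \in \A(W_3)$ (the transformation is not surjective in general), so "bounded above by $263$" is the correct and best statement obtainable by this method.
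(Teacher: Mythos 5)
Your proposal is correct and follows essentially the same route as the paper: apply the clique--star transformation to the clique consisting of all four vertices of $W_3\cong K_4$ to obtain $S_4$, use the map of equation~(\ref{csdu}) into $\A(S_4)$, note injectivity (the original $\r$ is recovered from the coordinates indexed by $V(W_3)$ and $\d$ is then determined by $\r$), and invoke $|\A(S_4)|=263$ from the Egyptian-fraction count. Your write-up actually justifies the injectivity step more explicitly than the paper, which merely asserts it.
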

\begin{proof}
Relations \ref{csdu} gives an injective function from the set of arithmetical structures $Arith(G)$ of a graph $G$ to the set of arithmetical structures of its clique star transform cs$(G,C)$.Since   clique star transformation of $W_3$ is $S_4$ and   the cardinality of the set of  arithmetical structures   of $S_4$  is $263, $  we conclude the result. 
\end{proof}

Next example gives a list of  $167$ arithmetical structures on $W_3.$\\



\begin{example} \label{ASW4} There are atleast $167$ arithmetical structures on wheel graph $W_3$.  Note that, if $(\d,\r)\in \A(W_3)$, then $(\pi \d, \pi \r)\in \A(W_3),$ for every permutation $\pi\in \mathcal{S}_4$. We have listed all $167$ arithmetical structures in the table given below: 
\begin{center}
    \begin{tabular}{ | l | l | l| p{5cm} | }
    \hline
   ${\bf{ d }} = (d_1, d_2, d_3, d_4)$ & \bf{r }= $(r_1, r_2, r_3, r_4)$  & $\#\{(\pi \d,\pi \r):\pi\in \mathcal{S}_4\}$ \\
   \hline
   (3, 3, 3, 3) & (1, 1, 1, 1) &  1 \\
   (5, 5, 2, 2) & (1, 1, 2, 2) &  6 \\
   (5, 5, 5, 1) & (1, 1, 1, 3) &  4 \\
   (11, 2, 2, 3) & (1, 4, 4, 3) & 12  \\
  (11, 1, 5, 3), & (1, 6, 2, 3) & 24 \\
  (7, 7, 3, 1) & (1, 1, 2, 4) & 12 \\
    (11, 11, 1, 2) & (1, 1, 6, 4) & 12   \\
  (9, 4, 4, 1) & (1, 2, 2, 5) & 12 \\
  (19,3 ,4 ,1 ) & (1, 5, 4, 10) & 24 \\
  (23,7 ,2 ,1 ) & (1, 3, 8, 12) & 24  \\
 (17, 8, 2, 1) & (1, 2, 6, 9) & 24 \\
  (5, 2, 3, 3) & (2, 4, 3, 3) &  12 \\
   \hline
    \end{tabular}
\end{center}
\end{example}


For an arithmetical $r$-structure $\r$, let $\r(1) = \# \{i: r_i=1\}. $




\begin{example}
Some of the arithmetical structures $(\bf{d}, {\bf r})$ on $W_3, W_4$, $W_5$ with $\r(1) = 4$ are given below 
\begin{center}
    \begin{tabular}{ | l | l| l|  }
    \hline
 $W_n$  & \bf{r } & \# \\
   \hline
 3   & (1, 1, 1, 1)  & 1 permu\\
 \hline
 4  & (1, 1, 1, 1, 1), (1, 1, 1, 1, 3), (2, 1, 1, 1, 1), (4, 1, 1, 1, 1) &  16 permu\\
 \hline 
 5 &  (1, 1, 1, 1, 1, 1), (1, 1, 1, 1, 1, 3), (5, 1, 1, 1, 1, 1), (2, 2, 1, 1, 1, 1)&\\
  & (3, 5, 1, 1, 1, 1), (6, 2, 1, 1, 1, 1), (7, 3, 1, 1, 1, 1)&\\
   \hline
   \end{tabular}
   \end{center}
\end{example}

\begin{example}
Some of the arithmetical structures $(\bf{d}, {\bf r})$ on $W_3, W_4$ with $\r(1) = 3$ are given below

\begin{center}
    \begin{tabular}{ | l | l| }
    \hline
 $W_n$ &   \bf{r }   \\
   \hline
 3 &   (1, 1, 1, 1), (1, 1, 1, 3)  \\
 \hline
 4 &  (1, 1, 1, 1, 1), (1, 1, 1, 1, 3), (2, 1, 1, 1, 1), (4, 1, 1, 1, 1), \\
 &(1, 1, 1, 2, 2), (1, 1, 1, 2, 4), (1, 1, 1, 6, 4) \\
   \hline
   \end{tabular}
   \end{center}   
\end{example}

\section{Generalized Blowup Operation on Arithmetical Graph}
Lorenzini in [\cite{LD89},page 485] introduced an operation, called the blowup, of an arithmetical structure of a graph. The blowup generalizes the clique–star transformation \cite{corrales2018arithmetical} of a graph. Motivated by the blowup operation introduced by Lorenzini in [\cite{LD89}, page 485] and the clique–star transformation described in \cite{corrales2018arithmetical}, we define a series of analogous operations that extend these concepts in various directions. Like the blowup operation, the operations presented in this work, when applied to a graph, do not always yield another graph.

To address this limitation, in this section, we introduce a class of matrices, denoted as $\mathfrak{M}$, whose elements, termed "generalized graphs," serve as matrix analogs of traditional graphs. We also define the notion of a generalized arithmetical structure for these generalized graphs. Importantly, we observe that the operations mentioned earlier, when applied to a graph, produce elements of $\mathfrak{M}$, effectively transforming graphs into generalized graphs.


\begin{definition}\cite{LD89}
Let $G$ be any connected graph (without loops) having vertex set $\{v_1, \dots, v_n\}$. Let  $a_{ij}$ be the number of edges joining $v_i$ and $v_j$. Let $A$ be the   $n\times n$ matrix having $(i,j)th$ entry $a_{ij}$, for $i\ne j$ and having diagonal entries $0.$ 
An arithmetical graph  $(G, M, {\bf r})$ consists of  a connected graph $G$,  $\textbf{d}, \textbf{r}\in \mathbb{N}^n$ with $lcm \{r_i:1\le i\le n\}=1$ such that $M=diag(\textbf{d})-A =L(G,\d)$ and $M{\bf r}=0.$
\end{definition}

 Given an arithmetical graph  $(G, M, \textbf{r} )$  where $ \textbf{r}^t = ( r_1, r_2, \cdots, r_n)$ and integer vector $ \textbf{q}^t = (q_1, q_2, \cdots, q_n)$ such that $$x = \sum _{i = 1} ^  {n} q_ir_i \neq 0.$$


We consider 
$$M _q = \begin{pmatrix}
    M + \textbf{q}\textbf{q}^t & -\textbf{q}\\
    -\textbf{q}^t & 1
\end{pmatrix}$$
and

$$M _{q_-} = \begin{pmatrix}
    M - \textbf{q}\textbf{q}^t  & \textbf{q}\\
    \textbf{q}^t & 1
\end{pmatrix}.$$




\begin{lemma}
 Let  $(G, M, {\bf r})$ be an arithmetical graph. Let  $\textbf{q}^t = (q_1, q_2, \cdots, q_n) \in \mathbb{Z}^n$  such that $x=\sum_{i=1}^n q_ir_i\ne 0$. Then there exist invertible matrices  $P$ and $Q$   having only integer entries such that 
 $$PM _{\textbf{q}}Q= P\begin{pmatrix}
   M + \textbf{q}\textbf{q}^t & -\textbf{q}\\
    -\textbf{q}^t & 1
\end{pmatrix}Q= \begin{pmatrix}
  M& \textbf{0}  \\
  \textbf{0}& 1
\end{pmatrix}.$$ 
\end{lemma}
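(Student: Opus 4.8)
The plan is to exhibit explicit integer elementary operations that simultaneously clear the last row and column of $M_{\mathbf q}$ and restore the block $M$ in the top-left corner. The key observation is that the bordered matrix
\[
M_{\mathbf q}=\begin{pmatrix} M+\mathbf q\mathbf q^t & -\mathbf q\\ -\mathbf q^t & 1\end{pmatrix}
\]
already has a $1$ in the bottom-right corner, so that entry can serve as a pivot for integer row and column operations (no division is required). First I would use the bottom row to clear the last column: adding $q_i$ times the last row to the $i$-th row, for $i=1,\dots,n$, replaces the $(i,n+1)$ entry $-q_i$ by $0$ and changes the $(i,j)$ block entry from $(M+\mathbf q\mathbf q^t)_{ij}$ to $(M+\mathbf q\mathbf q^t)_{ij}-q_iq_j$, i.e. back to $M_{ij}$. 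This is the left multiplication by an integer matrix $P=\begin{pmatrix} I_n & \mathbf q\\ \mathbf 0 & 1\end{pmatrix}$, which is unimodular with inverse $\begin{pmatrix} I_n & -\mathbf q\\ \mathbf 0 & 1\end{pmatrix}$.

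Next I would symmetrically clear the last row: right multiplication by $Q=\begin{pmatrix} I_n & \mathbf 0\\ \mathbf q^t & 1\end{pmatrix}$ (also unimodular) adds $q_j$ times the last column to the $j$-th column, turning the surviving $(n+1,j)$ entry $-q_j$ into $0$. One then checks by direct block multiplication that
\[
P M_{\mathbf q} Q=\begin{pmatrix} I_n & \mathbf q\\ \mathbf 0 & 1\end{pmatrix}\begin{pmatrix} M+\mathbf q\mathbf q^t & -\mathbf q\\ -\mathbf q^t & 1\end{pmatrix}\begin{pmatrix} I_n & \mathbf 0\\ \mathbf q^t & 1\end{pmatrix}=\begin{pmatrix} M & \mathbf 0\\ \mathbf 0 & 1\end{pmatrix},
\]
using $(M+\mathbf q\mathbf q^t)-\mathbf q\mathbf q^t=M$ for the top-left block and the fact that the cross terms $-\mathbf q+\mathbf q=\mathbf 0$ and $-\mathbf q^t+\mathbf q^t=\mathbf 0$ after the two steps. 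So $P$ and $Q$ are the desired matrices, both having only integer entries and both invertible over $\mathbb Z$.

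There is essentially no obstacle here: the argument is the standard observation that a unit pivot in a bordered matrix allows one to strip off the border by integer operations. The only point that needs a word of care is bookkeeping — verifying that performing the row operation first and the column operation second (rather than the reverse) still lands exactly on $\operatorname{diag}(M,1)$, since the row step already altered the top-left block before the column step acts; but because $Q$ only combines the last column into the others and the last column has been zeroed above the $(n+1,n+1)$ entry after the $P$-step, the column step does not disturb the top-left block. (The hypothesis $x=\sum_i q_ir_i\neq 0$ is not needed for this lemma as stated; it is relevant only for the subsequent interpretation of $M_{\mathbf q}$ as an arithmetical structure, since it guarantees the kernel vector of $M_{\mathbf q}$ has the right form.) Hence the claimed identity holds and the lemma follows.
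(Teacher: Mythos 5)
Your proof is correct and uses exactly the same matrices $P=\begin{pmatrix} I_n & \mathbf q\\ \mathbf 0 & 1\end{pmatrix}$ and $Q=\begin{pmatrix} I_n & \mathbf 0\\ \mathbf q^t & 1\end{pmatrix}$ as the paper, which simply exhibits them and asserts the block product. Your additional remarks on unimodularity and on the hypothesis $x\neq 0$ being unnecessary for this particular identity are accurate.
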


\begin{proof}
Take $$P=
    \begin{pmatrix}
    I_n & \textbf{q}\\
    \textbf{0} & 1
    \end{pmatrix},$$ $$Q=
    \begin{pmatrix}
    I_n & \textbf{0}\\
    \textbf{q}^t& 1
    \end{pmatrix}.$$
    Then $$
    PM _{\textbf{q}}Q= P\begin{pmatrix}
   M + \textbf{q}\textbf{q}^t & -\textbf{q}\\
    -\textbf{q}^t & 1
\end{pmatrix}Q= \begin{pmatrix}
  M& \textbf{0}  \\
  \textbf{0}& 1
\end{pmatrix}.$$  Hence the proof.
\end{proof}

Note that $M_q (r_1,\ldots, r_n, x)^t= 0$  and  $  M_{q_-}(r_1,\ldots, r_n, x)^t=0$. The matrix $M_q$ is symmetric. When it defines an arithmetical graph this new graph is called the blow-up of $G$ with respect to $q$. The blow-up has the same critical groups as the graph $G$. However, the blowup does not always have a meaning in the context of graphs.

Next, we define a  class $\mathfrak{M}$ of matrices as a generalized matrix analog of graph  as follows.

\begin{definition}
Let $\mathfrak{M}$ be collection of all square non-negative integral matrices   $A$ of size $n$ such that 
\begin{enumerate}
   \item Diagonal entries of $A$ are zero.
       \item   There exists $\textbf{d}\in \mathbb{N}^n$ with $\textbf{d}I_n-A$ is almost non-singular with $\det(\textbf{d}I_n-A)=0.$ (see Definition \ref{almost-non})
\end{enumerate} 
We call the elements of $\mathfrak{M}$ as generalized graphs.
If ${\bf d}, {\bf r} \in \mathbb{N}^n$  be such that $gcd\{r_i: 1\le i\le n\}=1$ and $({\bf d}I_n-A){\bf r}=0,$ then we call $({\bf d}, {\bf r})$  generalized  arithmetical structure of $A\in \mathfrak{M}. $ We denote by $\A(A)$ the set of all arithmetical structures on $M.$
\end{definition}

By Theorem \ref{Theorem 3.2 from Corrales and Valencia}, it is clear that generalized  arithmetical structure exists for every $A\in \mathfrak{M}$. Also, by Theorem \ref{counting} from  \cite{corrales2018arithmetical}, we conclude that number of generalized arithmetical structures of a matrix in $ \mathfrak{M} $ is finite.

The following theorem establishes the finiteness of the set of generalized arithmetical structures of elements of $\mathfrak{M}$ (or generalized matrix analogs of graphs). This result raises the problem of determining the total number of arithmetical structures for the elements of $\mathfrak{M}.$

\begin{theorem}\label{counting}
   If $M$ is a non-negative matrix with all diagonal entries equal to zero, then $\A(M)$  is finite if and only if $M$ is irreducible.
\end{theorem}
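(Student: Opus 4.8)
The plan is to prove both directions of the equivalence using the $M$-matrix machinery recalled in Section~2, exploiting that for $M\in\mathfrak{M}$ the matrix $L=\diag(\mathbf{d})-M$ is an almost non-singular $M$-matrix with $\det(L)=0$ for a suitable $\mathbf{d}\in\mathbb{N}^n$, and that such an $L$ is, by Theorem~\ref{Theorem 3.2 from Corrales and Valencia}, irreducible precisely when it admits a strictly positive kernel vector. First I would observe that $M$ is irreducible if and only if $\diag(\mathbf{d})-M$ is irreducible for any (equivalently, every) $\mathbf{d}$, since reducibility as characterized in Theorem~\ref{reducible} depends only on the off-diagonal zero pattern, which $\diag(\mathbf{d})-M$ and $M$ share. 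This lets me translate the hypothesis ``$M$ irreducible'' into ``$L(G,\mathbf{d})$ irreducible'' freely.

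For the direction ``$M$ reducible $\Rightarrow$ $\A(M)$ infinite'' (the contrapositive of one implication), I would take a partition $N_1\cup N_2$ of $\{1,\dots,n\}$ witnessing reducibility, so that after a permutation $M$ is block upper triangular with diagonal blocks $M_1$ (on $N_1$) and $M_2$ (on $N_2$). Each $M_i$ is again a non-negative matrix with zero diagonal; since $M\in\mathfrak{M}$ forces a positive kernel vector of $\diag(\mathbf{d})-M$ to exist (Theorem~\ref{Theorem 3.2 from Corrales and Valencia}), one checks that the restriction to $N_1$ gives a positive kernel vector for the corresponding block, hence $M_1\in\mathfrak{M}$ with its own generalized arithmetical structures; meanwhile the off-diagonal coupling block allows the $N_2$-entries to be scaled freely. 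The key point is that from one generalized arithmetical structure $(\mathbf{d},\mathbf{r})$ one manufactures infinitely many by rescaling the $\mathbf{r}$-entries indexed by $N_2$ and correspondingly adjusting the diagonal $\mathbf{d}$-entries of the $N_1$-block to absorb the change coming through the coupling block, keeping all entries positive integers and the gcd equal to $1$; the block-triangular structure guarantees the defining equation still holds. This produces infinitely many elements of $\A(M)$, contradicting finiteness.

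For the converse ``$M$ irreducible $\Rightarrow$ $\A(M)$ finite'', I would mirror the argument Corrales and Valencia use for graphs: if $M$ is irreducible then $L(G,\mathbf{d})=\diag(\mathbf{d})-M$ is an irreducible $Z$-matrix that is almost non-singular with determinant $0$, and by Theorem~\ref{Theorem 2.6 from Corrales and Valencia}(2) adding any positive diagonal perturbation makes it a non-singular $M$-matrix. Given any generalized arithmetical structure $(\mathbf{d}',\mathbf{r})$, we have $\mathbf{d}'\ge\mathbf{d}$ componentwise is impossible unless $\mathbf{d}'=\mathbf{d}$ (else $\diag(\mathbf{d}')-M$ would be non-singular yet annihilate $\mathbf{r}>0$), which bounds each $d_i'$ by the finitely many admissible diagonal vectors $\mathbf{d}$; and for each such $\mathbf{d}$ the kernel of $\diag(\mathbf{d})-M$ is one-dimensional (rank $n-1$, again from almost non-singularity), so the primitive positive generator $\mathbf{r}$ is unique. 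Hence $\A(M)$ injects into a finite set of admissible diagonal vectors, giving finiteness. I would cite the almost-non-singular $M$-matrix facts verbatim from Section~2 to keep this short.

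The main obstacle I anticipate is the reducible case: carefully setting up the block decomposition so that the ``free rescaling'' genuinely yields valid generalized arithmetical structures (all entries in $\mathbb{N}$, gcd $=1$, exact kernel equation) rather than merely real solutions, and in particular handling the coupling block $*$ so that adjusting the $N_1$-diagonal to compensate keeps those entries positive integers — this may require choosing the scaling factor along a suitable arithmetic progression and possibly arguing that $M_1$ itself lies in $\mathfrak{M}$ (or iterating the decomposition down to irreducible diagonal blocks). The positivity-of-$\mathbf{d}$ bookkeeping under rescaling is the delicate part; everything else follows routinely from Theorems~\ref{Theorem 2.6 from Corrales and Valencia} and~\ref{Theorem 3.2 from Corrales and Valencia}.
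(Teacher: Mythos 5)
The paper itself offers no proof of this statement: Theorem~\ref{counting} is imported from \cite{corrales2018arithmetical} and used as a black box, so there is no in-paper argument to measure yours against. Your skeleton does follow the standard Corrales--Valencia route (antichain of admissible diagonal vectors plus uniqueness of $\mathbf{r}$ for the irreducible direction; block rescaling for the reducible direction), but as written it has two genuine gaps. In the irreducible direction you correctly deduce, via Theorems~\ref{Theorem 2.6 from Corrales and Valencia} and~\ref{Theorem 3.2 from Corrales and Valencia}, that two distinct admissible $\mathbf{d}$-vectors are incomparable; however, the step ``which bounds each $d_i'$ by the finitely many admissible diagonal vectors'' is circular --- you are invoking the finiteness you are trying to establish. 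An antichain in a poset need not be finite in general; what you need here is Dickson's lemma (every antichain in $(\mathbb{N}^n,\le)$ is finite), which is precisely the combinatorial engine of the original proof and is absent from your write-up.

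In the reducible direction, the rescaling of the $N_2$-part of $\mathbf{r}$ (with the scaling factor chosen in a congruence class modulo the lcm of the $N_1$-entries of $\mathbf{r}$, so that the adjusted $N_1$-diagonal entries remain positive integers and the gcd stays $1$) does manufacture infinitely many structures --- but only once you have one structure to start from. Your justification of nonemptiness, ``since $M\in\mathfrak{M}$ forces a positive kernel vector to exist,'' is both unavailable (the theorem assumes only that $M$ is non-negative with zero diagonal, not that $M\in\mathfrak{M}$) and self-defeating: by Theorem~\ref{Theorem 3.2 from Corrales and Valencia}, if $\diag(\mathbf{d})-M$ is an almost non-singular $M$-matrix with zero determinant then it is irreducible, so no reducible matrix belongs to $\mathfrak{M}$ and your hypothesis is vacuous in exactly the case you are treating. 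Worse, without a nonemptiness clause the equivalence as stated is false: for $M$ the $2\times 2$ zero matrix, or $M=\left(\begin{smallmatrix}0&1\\0&0\end{smallmatrix}\right)$, the matrix is reducible yet $\A(M)=\emptyset$ is finite. The statement you should be proving is that $\A(M)$ is \emph{nonempty and} finite if and only if $M$ is irreducible, and that version additionally requires an existence argument in the irreducible case, which your proposal does not address.
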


Next,  we give some generalizations of blowup operation given by Lorenzini.
\begin{itemize}
\item Let $(G,M, {\bf r})$ be an arithmetical  graph  and $\textbf{d}$ be the corresponding d-structure. Let  $\textbf{p}, \textbf{q} \in \mathbb{N}^n$.   Define a matrix 
$$\hat{M} = B_{\textbf{p},\textbf{q}} (M) = \begin{pmatrix}
    1 & -\textbf{q}\\
    -\textbf{p}^t & \textbf{p}^t\textbf{q} +  M
\end{pmatrix}.$$  Define ${\bf \hat{d}}, {\bf\hat{r}}\in \mathbb{N}^{n+1}$, where ${\bf\hat{d}}=(\hat{d}_0, \hat{d}_1, \ldots, \hat{d}_n)^t$ , ${\bf\hat{r}}= (\hat{r}_0, \hat{r}_1, \ldots, \hat{r}_n)^t$ by 
\begin{equation} \label{gen-blowup}
\begin{aligned}
   \hat{d_i} & = \begin{cases}
        & 1, \ \ if \ i = 0\\
        & d_i + p_iq_i, \ \ Otherwise
   \end{cases}\\
   \hat{r_i} &  =  \begin{cases}
       & \sum _{j = 1} ^ {n} r_j q_j, \ \ if \ i = 0\\
       & r_i, \ \ Otherwise
   \end{cases}
\end{aligned}
\end{equation}

Clearly, $ B_{\textbf{p},\textbf{q}} (M){\bf\hat{r}}=0.$  By Theorem \ref{Theorem 3.2 from Corrales and Valencia}  and Theorem~\ref{reducible},  $B_{\textbf{p},\textbf{q}} (M)$  is an almost non-singular $M$-matrix.  We shall call this a generalized blowup of $(G,M,{\bf r})$. Note that $\diag(\hat{\textbf{d}})- B_{\textbf{p},\textbf{q}} (M)$ is a generalized graph. Its  generalized arithmetical structure is given by Equation \ref{gen-blowup}. For $\textbf{p}=\textbf{q}$, $B_{\textbf{p},\textbf{q}} (M)=M_q$.
    \item   Given a non-negative integral $n \times n$ matrix $A$ with all its diagonal entries equal to zero, $\textbf{d}, \textbf{r}\in \mathbb{N}^n $ such that $(\diag(\textbf{d})-A)r=0.$  Let $\textbf{p} = (p_1, p_2, \cdots, p_n) \in \mathbb{N}^n, \textbf{q} = (q_1, q_2, \cdots, q_n) \in \mathbb{N}^n$ with $g = gcd ( q_1, q_2, \cdots, q_n)$. Define 
$$\tilde{A} = B _{\textbf{p},\textbf{q}} (A) = \begin{pmatrix}
    0 & \textbf{q}\\
    \textbf{p}^t & -\frac {[\textbf{p}^t\textbf{q}]}{g}  + A
\end{pmatrix},$$ 

where $[\textbf{p}^t\textbf{q}]=\textbf{p}^t\textbf{q}-\diag (p_1q_1, \ldots, p_nq_n)$. 
Define  ${\bf \hat{d}}, {\bf \hat{r}}\in \mathbb{N}^{n+1}$, where ${\bf \hat{d}}=(\hat{d}_0, \hat{d}_1, \ldots, \hat{d}_n)$ , ${\bf \hat{r}}= (\hat{r}_0, \hat{r}_1, \ldots, \hat{r}_n)$ by 

\begin{equation} \label{gen-blowup1}
\begin{aligned}
   \hat{d_i} &  = \begin{cases}
        & g \ \ if \ i = 0\\
        & d_i + \frac{p_iq_i}{g} \ \ Otherwise
   \end{cases}\\
   \hat{r_i} & =  \begin{cases}
       & \sum _{j = 1} ^ {n} \frac{r_j q_j}{g} \ \ if \ i = 0\\
       & r_i \ \ Otherwise
   \end{cases}
\end{aligned}
\end{equation}

Clearly, $(\diag(\hat{\textbf{d}})-B_{\textbf{p},\textbf{q}} (A))\hat{\textbf{r}}=0.$   By Theorem \ref{Theorem 3.2 from Corrales and Valencia}  and Theorem~\ref{reducible},   $\diag(\hat{\textbf{d}})-B_{\textbf{p},\textbf{q}} (A)$  is an almost non-singular $M$-matrix with determinant $0$.  Note that $B_{\textbf{p},\textbf{q}} (A))$ is a generalized graph. Its generalized arithmetical structure  is  $(\hat{\textbf{d}}, \hat{\textbf{r}})$  given by Equation \ref{gen-blowup1}. For $\textbf{p}=\textbf{q}$, $g=1$,  $\diag(\hat{\textbf{d}})-B_{\textbf{p},\textbf{q}} (A)$ becomes a blowup.
\end{itemize}



The following theorem demonstrates how the arithmetical structure on $W_{n+1}$ is derived from the arithmetical structure on $W_{n}$.  

\begin{theorem}
Let $W_{n}$ be wheel graph with $n$ vertices $v_0, v_1, \ldots, v_n$, where $v_0$ is centre vertex.  If $({\bf d}, {\bf r})$ is an arithmetical structure on $W_{n}$ with $r_0|r_1+r_n$, $r_n|r_0$, $r_1|r_0,$ then $W_{n+1}$ has an arithmetical structure $({\bf \tilde{d}}, {\bf \tilde{r}})$ given by \\
 $$  \tilde{d_i}  = \begin{cases}
         1 & \text{if}\;i = n+1 \\
        d_i  & \text{if}\; i\ne 0,1,n, n+1\\
        d_0+\frac{r_1+r_n+r_0}{r_0}, &\text{if}\; i=0\\
        d_1+\frac{r_1+r_0}{r_1}, &\text{if}\; i=1\\
        d_n+\frac{r_n+r_0}{r_n}, &\text{if}\; i=n\\
   \end{cases}$$
   $$\tilde{r_i}  =  \begin{cases}
        r_1+r_n+r_{0}, &\text{if}\;  i = n+1\\
        r_i, & Otherwise
   \end{cases}
$$
\end{theorem}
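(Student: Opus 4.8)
The plan is to verify directly that the pair $({\bf \tilde d}, {\bf \tilde r})$ satisfies the defining relations \eqref{rstructure} for an arithmetical structure on $W_{n+1}$, using the fact that $({\bf d}, {\bf r})$ already satisfies them on $W_n$ together with the three divisibility hypotheses $r_0 \mid r_1 + r_n$, $r_n \mid r_0$, $r_1 \mid r_0$. The underlying geometric picture is that $W_{n+1}$ is obtained from $W_n$ by inserting a new rim vertex $v_{n+1}$ between $v_n$ and $v_1$ on the outer cycle (so that in $W_{n+1}$ the rim is $v_1 - v_2 - \cdots - v_n - v_{n+1} - v_1$), and joining $v_{n+1}$ to the hub $v_0$. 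I would first write out the adjacency matrix of $W_{n+1}$ in block form relative to this labelling, so that the equation $(\diag({\bf \tilde d}) - A(W_{n+1})) {\bf \tilde r} = 0$ decomposes into one equation per vertex.

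The verification then splits into five cases according to the piecewise definition of $\tilde d_i$. For a generic rim vertex $i \notin \{0,1,n,n+1\}$ the neighbourhood is unchanged from $W_n$ and $\tilde r = r$ on those coordinates, so the equation $\tilde d_i \tilde r_i = \tilde r_{i-1} + \tilde r_{i+1} + \tilde r_0$ is exactly the old equation $d_i r_i = r_{i-1} + r_{i+1} + r_0$. For $i = n+1$ the new vertex has neighbours $v_n$, $v_1$, $v_0$, and $\tilde d_{n+1} = 1$, $\tilde r_{n+1} = r_1 + r_n + r_0$, so the required identity $1 \cdot (r_1 + r_n + r_0) = r_n + r_1 + r_0$ holds trivially; one must also check $\tilde r_{n+1} \in \mathbb{N}$, which is automatic. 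For $i = 1$: in $W_{n+1}$ the neighbours of $v_1$ are $v_2$, $v_{n+1}$, $v_0$, so one needs $\tilde d_1 r_1 = r_2 + (r_1 + r_n + r_0) + r_0$; substituting $\tilde d_1 = d_1 + (r_1 + r_0)/r_1$ and using the old relation $d_1 r_1 = r_2 + r_n + r_0$ reduces this to $r_1 + r_0 = r_1 + r_0$, and the hypothesis $r_1 \mid r_0$ guarantees $\tilde d_1 \in \mathbb{N}$. The case $i = n$ is symmetric, using $r_n \mid r_0$. Finally for the hub $i = 0$: the neighbours of $v_0$ in $W_{n+1}$ are $v_1, \dots, v_n, v_{n+1}$, so one needs $\tilde d_0 \tilde r_0 = (r_1 + \cdots + r_n) + (r_1 + r_n + r_0)$; here $\tilde r_0 = r_0$, $\tilde d_0 = d_0 + (r_1 + r_n + r_0)/r_0$, and the old hub relation $d_0 r_0 = r_1 + \cdots + r_n$ collapses this to $r_1 + r_n + r_0 = r_1 + r_n + r_0$, with $r_0 \mid r_1 + r_n$ (hence $r_0 \mid r_1 + r_n + r_0$) ensuring $\tilde d_0 \in \mathbb{N}$.

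It remains to check that ${\bf \tilde r}$ is primitive. Since $\gcd\{r_0, r_1, \dots, r_n\} = 1$ and the coordinates $r_1, \dots, r_n$ all reappear unchanged in ${\bf \tilde r}$, any common divisor of the entries of ${\bf \tilde r}$ divides all of $r_1, \dots, r_n$ and also $\tilde r_0 = r_0$, hence divides $\gcd\{r_0, \dots, r_n\} = 1$; thus ${\bf \tilde r}$ is primitive. Combining this with the five cases above and the converse direction of \eqref{rstructure}, we conclude $({\bf \tilde d}, {\bf \tilde r}) \in \A(W_{n+1})$. I expect no serious obstacle here — the content is entirely a bookkeeping check — the only point requiring care is getting the rim adjacencies of $W_{n+1}$ right under the chosen relabelling (in particular that $v_{n+1}$ sits between $v_n$ and $v_1$), since an off-by-one error in which vertices are "modified" would break the three cases $i \in \{0,1,n\}$.
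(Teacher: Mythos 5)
Your proposal is correct and takes essentially the same route as the paper: both realize $W_{n+1}$ as $W_n$ with a new rim vertex $v_{n+1}$ inserted between $v_n$ and $v_1$ (deleting the edge $v_1v_n$ and joining $v_{n+1}$ to $v_0$, $v_1$, $v_n$) and then verify $(\diag(\tilde{\bf d})-A(W_{n+1}))\tilde{\bf r}=0$ directly. The only difference is that you actually carry out the vertex-by-vertex computation, the integrality checks for $\tilde d_0,\tilde d_1,\tilde d_n$, and the primitivity of $\tilde{\bf r}$, all of which the paper compresses into ``by direct computation one can verify.''
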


\begin{proof}
    Let $A$ be the adjacency matrix of $W_{n}$. Observe that adjacency matrix of $W_{n+1}$ is given by 
   $$\tilde{A}= \begin{pmatrix}
    A -E_{(2,n+1)}-E_{(n+1,2)} & P^t\\
    P & 0
\end{pmatrix},$$
where $P=e_1+e_2+e_{n+1}\in \mathbb{Z}^{n+1}$and  $E_{(i,j)}$ denotes $n\times n$ matrix with $(ij)th$ entry $1$ and all other entries $0.$  By direct computation one can verify that $(\diag({\bf\Tilde{d}}) - \Tilde{A} ){\bf\tilde{r}} = 0$. Hence the result follows.
\end{proof}


\vspace{0.5cm}
\noindent{\bf Conclusion.} In this paper we derive several characteristics of the arithmetical stricture on the wheel graphs. Finally, we introduce a notion of blow-up operation for a class of matrices with an underlying graph structure.

\bibliographystyle{plain}
\bibliography{main}

\begin{thebibliography}{10}

\bibitem{KAAL20}
Kassie Archer, Abigail~C Bishop, Alexander Diaz-Lopez, Luis D~Garcia Puente, Darren Glass, and Joel Louwsma.
\newblock Arithmetical structures on bidents.
\newblock {\em Discrete Mathematics}, 343(7):111850, 2020.

\bibitem{criticalgroupon(starandcompletegraph)}
Kassie Archer, Alexander Diaz-Lopez, Darren Glass, and Joel Louwsma.
\newblock Critical groups of arithmetical structures on star graphs and complete graphs.
\newblock {\em the electronic journal of combinatorics}, 31:1--28, 2024.

\bibitem{BTW88}
Per Bak, Chao Tang, and Kurt Wiesenfeld.
\newblock Self-organized criticality.
\newblock {\em Physical review A}, 38(1):364, 1988.

\bibitem{baker2007riemann}
Matthew Baker and Serguei Norine.
\newblock Riemann--roch and abel--jacobi theory on a finite graph.
\newblock {\em Advances in Mathematics}, 215(2):766--788, 2007.

\bibitem{Berman1994andPlemmons}
Abraham Berman and Robert~J Plemmons.
\newblock {\em Nonnegative matrices in the mathematical sciences}.
\newblock SIAM, 1994.

\bibitem{biggs1997algebraic}
Norman Biggs.
\newblock Algebraic potential theory on graphs.
\newblock {\em Bulletin of the London Mathematical Society}, 29(6):641--682, 1997.

\bibitem{BHDNJC18}
Benjamin Braun, Hugo Corrales, Scott Corry, Luis David~García Puente, Darren Glass, Nathan Kaplan, Jeremy L.~Martin, Gregg Musiker, and Carlos~E. Valencia.
\newblock Counting arithmetical structures on paths and cycles.
\newblock {\em Discrete Mathematics}, 341(10):2949–2963, October 2018.

\bibitem{TC11}
T.~D. Browning and C.~Elsholtz.
\newblock {The number of representations of rationals as a sum of unit fractions}.
\newblock {\em Illinois Journal of Mathematics}, 55(2):685 -- 696, 2011.

\bibitem{corrales2018arithmetical}
Hugo Corrales and Carlos~E Valencia.
\newblock Arithmetical structures on graphs.
\newblock {\em Linear Algebra and its Applications}, 536:120--151, 2018.

\bibitem{Corry2018DivisorsAS}
Scott Corry and David Perkinson.
\newblock Divisors and sandpiles: An introduction to chip-firing.
\newblock {\em AMS Non-Series Monographs}, 114, 2018.

\bibitem{Dhar90}
Deepak Dhar.
\newblock Self-organized critical state of sandpile automaton models.
\newblock {\em Phys.Rev.Lett}, 64(14):1613--1616, 1990.

\bibitem{diestel2024graph}
Reinhard Diestel.
\newblock {\em Graph theory}.
\newblock Springer (print edition); Reinhard Diestel (eBooks), 2024.

\bibitem{Giorgi}
Giorgio Giorgi.
\newblock Nonnegative square matrices: Irreducibility, reducibility, primitivity and some economic applications.
\newblock {\em DEM Working Paper Series, 175}, 2019.

\bibitem{godsil2001algebraic}
Chris Godsil and Gordon~F Royle.
\newblock {\em Algebraic graph theory}, volume 207.
\newblock Springer Science \& Business Media, 2001.

\bibitem{HZLJ20}
Zachary Harris and Joel Louwsma.
\newblock On arithmetical structures on complete graphs.
\newblock {\em Involve, a Journal of Mathematics}, 13(2):345--355, 2020.

\bibitem{ZHJL2024}
Zachary Harris and Joel Louwsma.
\newblock On arithmetical structures on complete graphs.
\newblock {\em arXiv}, 2024.

\bibitem{LP10}
Nikita Kalinin.
\newblock Sandpile solitons in higher dimensions.
\newblock {\em Arnold Mathematical Journal}, pages 1--20, 2023.

\bibitem{LLJP2010}
James~Propp Lionel~Levine.
\newblock Whatis...asandpile?
\newblock {\em Notices Amer. Math. Soc.}, 57(8):76--979, 2010.

\bibitem{LD89}
Dino~J Lorenzini.
\newblock Arithmetical graphs.
\newblock {\em Mathematische Annalen}, 285(3):481--501, 1989.

\bibitem{lorenzini1991finite}
Dino~J Lorenzini.
\newblock A finite group attached to the laplacian of a graph.
\newblock {\em Discrete mathematics}, 91(3):277--282, 1991.

\bibitem{PCK88}
Kurt~Wiesenfeld PerBak, ChaoTang.
\newblock Self-organized criticality.
\newblock {\em Phys.Rev.A}, 38:364--374, 1988.

\bibitem{sandor03}
Csaba S{\'a}ndor.
\newblock On the number of solutions of the diophantine equation.
\newblock {\em Periodica Mathematica Hungarica}, 47(1-2):215--219, 2003.

\bibitem{Sloane18}
Neil~JA Sloane et~al.
\newblock The on-line encyclopedia of integer sequences, 2003.

\end{thebibliography}
\end{document}